\newcommand{\fix}[1]{{\color{black} #1}}
\crefname{hypothesis}{Hypothesis}{Hypotheses}
\title{Collective motion planning for a group of robots using intermittent diffusion\thanks{
		\funding{This work was partially supported by grants NSF DMS-1830225, DMS-1620345, DMS-1720306, and ONR N00014-18-1-2852.}}}
\author{Christina~Frederick\thanks{Department of Mathematical Sciences, New Jersey Institute of Technology, Newark, New Jersey 10702
		(\email{christin@njit.edu}, \url{http://web.njit.edu/\string~christin/}).}
	\and Magnus~Egerstedt\thanks{Department
		of Electrical and Computer Engineering, Georgia Institute of Technology, Atlanta,
		GA, 30332 USA  (\email{magnus@gatech.edu}).}
	\and Haomin~Zhou\thanks{Department of Mathematics, Georgia Institute of Technology, Atlanta,
		GA, 30332 USA  (\email{ hmzhou@gatech.edu})}% 
}
\begin{document}
	\maketitle
	
	\begin{abstract}In this work we establish a simple yet effective strategy, based on intermittent diffusion, for enabling a group of robots
		to accomplish complex tasks,  shape formation and assembly.  
		We demonstrate the feasibility of this approach and rigorously prove collision avoidance and convergence properties of the proposed algorithms.
		%The simplicity of the robots refers to the low functionality, for example, there may be limited communication between the robots or a fixed sensing range. Tasks can include exploration of an unknown environment, shape formation, and assembly. We demonstrate the feasibility of our strategy and prove some rigorous guarantees.
	\end{abstract}
	% REQUIRED
	\begin{keywords}
		Path planning, multi-agent systems, optimal transport, intermittent diffusion
	\end{keywords}
	
	% REQUIRED
	\begin{AMS}
		68Q25, 68R10, 68U05
	\end{AMS}

	\section{Introduction}
	Motion planning for multi-robot systems has drawn significant attention in recent years due to the emergence of a number of new application scenarios, e.g., 
	%		an increased interest in various practical applications %and the inherent need for robots to move in the real world to accomplish 
	\cite{Latombe1990,Yan2013}.  Compared to single robot systems, multi-robot systems have many benefits, including spatial distribution, efficiency and robustness at completing a task due to division of labor, localization, information-sharing, redundancy, and potentially lower cost.
	%if the robots are inexpensive to make.  
	On the other hand, motion planning for multi-robot systems must face significant challenges, such as collisions, deadlock due to the presence of local minima in the multi-objective functions from which the controllers are derived,
	%that control the dynamics, 
	and uncertainty introduced from the environment and stochastic effects in the system \cite{Koren}. Computationally, the motion planning problem can be NP-hard and not solvable in polynomial time even for some two\fix{-}dimensional cases \cite{Parker2002}. Furthermore, all of these difficulties are exacerbated when the robots are limited in capabilities, for example, short\fix{-}range communications. Addressing those existing challenges and satisfying the ever\fix{-}growing desire for new missions demand novel strategies and developments in both control engineering and their underlying mathematical theory.
	
	% In this paper we propose a robust method for multi-robot motion planning that enable groups of simple robots to avoid both collision and congestion without costing significant resources.

	There is a vast literature for path planning that spans widely known methods, including graph\fix{-}based approaches such as A*, D*, or D*-lite, \cite{Dijkstra1959, Ferguson05, Hart1968, Koenig2004, Koenig, Koenig2005, Stentz1995, Likachev2008}, randomized algorithms such as Probabilistic Road Maps (PRM) \cite{Overmars92, Kavraki94,Amato96,Svestka1997}, and tree-search algorithms, including Rapidly-exploring Random Tree (RRT) \cite{Lavalle1998,LaValle2000, Fiorini1998, Park2013, Karaman}. These methods find trajectories, often optimal ones, by generating feasible paths defined by nodes on a lattice or random tree that characterizes the space of possible configurations. 
	
	Much progress has been made in adapting existing methods 
	%and creating new ones for 
	to cooperative path planning problems for relatively small groups of robots \cite{Anderson2018, Cao1997, Ferrari2016, Jager2001, Lee2015, Ogren2001, Parker2002, Gerkey2004, Rekleitis2004, Luna2011, Agmon2012,  Yan2013, Hoy2015, Marcolino2009a} or the design of cooperative motion strategies without explicit preplanning of optimal paths \cite{Santos2018, Lu2014a,Egerstedt2016, Lee2015}. Readers are referred to a few survey papers on aerial swarm robots \cite{Chung2018} and collective behavior of multi-agent algorithms \cite{Rossi2018} that provide extensive lists of papers and summaries of many methods appeared in recent years. It is worth noting that one of the conclusions in \cite{Rossi2018} highlights the artificial potential functions (APF) method for its versatility, simplicity, scalability and high expressivity in swarm robots, and calls for new developments in both theory and algorithms that share the key properties of APF.
	
	APF is 
	%first 
	proposed in \cite{Khatib1986}. It formulates the shape-formation problem as a problem of minimizing a potential composed of an attractive field, based on the desired shape, and a repelling field based on obstacles.  Designed originally for single-robot trajectories  \cite{Chanclou, Warren}, these theories and methods have been extended and improved upon over the past several decades, including the addition of simulated annealing and an extension to dynamic environments \cite{Rimon1992, Ge2002, Park2001a, Warrenb, Park2001, MinCheolLee2003}. Due to its simplicity and scalability, APF methods can handle large groups of robots, in which each robot regards others as obstacles, and higher dimensional problems efficiently. \fix{Recently, in \cite{Hsieh2008}, potential based methods were succesfully used to develop decentralized controllers for shape formation of a swarm of robots.} However, a well-known limitation of APF is the presence of local minima caused by the repelling forces of obstacles, leading to potential deadlocks.

	%in the multi-robot setting.
	
	%The field is constantly evolving and there has been much 
	%Example include
	%		 . On the frontiers of path planning research, swarm robotics is a rapidly growing area addressing specific challenges arising from coordinating very large groups of simple robots. Strategies devised in ,  for example, enforce 
	%collision avoidance using Lloyd's method involving computations of Voronoi partitions of the configuration space. 
	
	In this paper, we advocate designing motion planning methods for multi-robot systems by equipping APF with new ideas, such as intermittent diffusion, in recent developments in stochastic differential equations (SDEs) and global optimization, \fix{and Wasserstein gradient flows in probability space}.  We cast the motion planning for a group of robots as transporting one point-mass distribution (initial shape) to another point-mass distribution (target shape).  Unlike many existing motion planning problems in which each robot knows its target configuration, we do not assume that the robots know their precise destination, rather they must form the desired \fix{shape or} distribution in the end. We propose a strategy that produce{\color{blue}s} algorithms to control the group dynamics using carefully designed potentials and stochasticity.  Our contributions include
	\begin{enumerate}
		\item Design two dynamical systems, based on the idea of intermittent diffusion, that alternately produce the motion trajectories for a group of robots.
		\item Prove that our strategy produce\fix{s} collision{\color{blue}-}free motions in both continuous and discrete settings. 
		\item Prove the convergence to the desired shape by using optimal transport theory. Demonstrating the approach can overcome the problem of local minima and deadlocks.
	\end{enumerate} 
	
	It is worth mentioning that our approach is closely related to the theory of optimal transport \cite{Kantorovich2006, Villani2008}, a mathematical branch that finds many successful applications in 
	%has been successfully applied to problems in 
	%, a rapidly developing mathematical field that produces successful control algorithms in 
	optics, econometrics, and computer graphics \cite{Ambrosio, Delon2013, Feng2016, Galichon2017,Merigot2011, Yu2007}, just to name a few.
	%Connecting the ideas of optimal transport to large multi-robot systems, we show that desired convergence can be guaranteed. Furthermore, our approach overcomes the problem of local minima and deadlocks by employing intermittent diffusion, as in the method of evolving junctions (MEJ) \cite{Lu2014, Li2017}. 	
	The connection between our approach and optimal transport theory {\color{blue} has} two different aspects. In theory, our proof for the convergence is through intermittent diffusion, whose proof relies on optimal transport theory. In {\color{blue} our} algorithm, the paths produced can be viewed as randomly perturbed particle motions, whose distribution density satisfies the well-known Fokker-Planck equation, which is regarded as a gradient flow of the relative entropy in optimal transport theory. This gradient flow viewpoint, which ensures its convergence to the Gibbs distribution, inspired our design. We use {\color{blue} the} target shape to create a Gibbs distribution, which guides the particle motions to produce the path.

	We also want to mention  that the proposed method differs from similar applications of optimal transport to robot path-planning \fix{\cite{Bandyopadhyay2014,DeBadyn2019, Krishnan2018}, in which either linear programming, quadratic programming}, or primal-dual method is used to identify the transport map. Instead of resorting to optimization methods in computations, we directly prescribe the gradient like dynamics for each robot to generate its trajectory using local information. The resulting equations can be simulated by robust numerical algorithms and executed \fix{efficiently}. %in real time.
	Furthermore, although our method shares a lot of similarities with APF, there are key differences. We add intermittent random perturbations in our dynamics to avoid deadlock, \fix{which overcomes the main limitation of APF at a moderately increased computation cost}. As in the method of evolving junctions (MEJ) \cite{Lu2014, Li2017}, it can be shown that the intermittent dynamics converges to the desired shape much quicker than the continuous white noise perturbations. \fix{In addition, the repelling fields from obstacles in APF methods affect the potential everywhere in the domain, while in the proposed method,
		%		. For a large number of robots, the number of repelling fields increases, along with more possibilities for undesired local minima.  In the proposed algorithm, 
		each robot is viewed as a dynamically moving obstacle to the other robots, and its repelling effect is restricted to a small, local region}.  
	
	When viewing motion planning for multi-robot systems as transport of distributions, we note that there {\color{blue} is} recent work inspired by statistical physics \cite{Bertozzi2004, Zhang2018}, in which rigorous error estimates have been obtained between partial differential equations (PDEs) that model the swarm dynamics and the target distribution, enabling desired coverage performance. \fix{PDEs have also been used in \cite{Eren2017} to generate velocity fields that govern the motion-planning and incorporate collision avoidance. In \cite{Elamvazhuthi2019}, the controllability properties of the advection-diffusion equation are used to derive conditions on the target probability distribution that guarantee convergence in finite time for certain control inputs.} In addition, there are also other stochastic methods for path planning and control \cite{Kalra2006, Kaminka2010, Li2017b}.

	The paper is organized as follows. In \S \ref{sec:mathpre}, we present the basic optimal transport theory and Fokker-Planck equation that inspire us to design the dynamics. In \S \ref{sec:modelsetup}, we formulate the continuous problem in terms of a system of SDEs. The discretized problem is described in \S \ref{sec:implementation}. In \S \ref{sec:numerics} we provide numerical simulations of the shape formation problem for different shapes and different size groups. We provide theoretical guarantees for global convergence of the system and collision avoidance, both in the continuous and discrete settings in \S\ref{sec:theory}.
	
	\section{Relations between SDEs and optimal transport}\label{sec:mathpre}
	In this section, we briefly review the connections among stochastic differential equations (SDEs), Fokker-Planck equation, Gibbs distribution, free energy{\color{blue},} and optimal transport distance. These relations provide the theoretical foundation on which we design the dynamics for the motion planning of a group of robots.
	
	Let us consider a potential function $\Psi(y)$, \fix{in which $y \in \RR^{Nd}$ represents the  locations of $N$ robots in a bounded domain $\Omega\subset \RR^d$}. The white noise perturbed gradient flow refers to a SDE 
	\begin{equation} \label{sde1}
	d Y(t) = - \nabla \Psi(Y(t)) dt + \sigma d W(t),
	\end{equation}
	where $W(t)$ is the standard $\fix{Nd}$-dimensional Brownian motion and $\sigma$ a given constant. Denoting $\rho(t, y)$ the density function for the random variable $Y(t)$, the evolution of $\rho$ is governed by the Fokker-Planck equation according to the classical diffusion theory, i.e.
	\begin{equation}\label{fpe1}
	\frac{\partial \rho}{\partial t} = \nabla \cdot (\nabla \Psi(y) \rho) + \frac{1}{2} \sigma^2 \Delta \rho. 
	\end{equation}
	\fix{By directly plugging in the well known Gibbs distribution
		\begin{align} \label{gibbs}
			\rho^{*}(y) &= P^{-1}\exp(-2\Psi(y)/\sigma^2),\\ \text{where} \quad P&=\int_{\RR^{Nd}} \exp(-2\Psi(y)/\sigma^2)dx,
		\end{align}
		we see that $\rho^{*}$ is a steady state solution of \eqref{fpe1}, because it satisfies the equation and it is time independent. In other words, Gibbs distribution is an invariant measure of the system \eqref{sde1}. From the exponential form of $\rho^{*}(y)$, we also observe that the density $\rho^{*}(y)$ takes the largest value when $\Psi(y)$ reaches its global minimum. 
		
		We would like to remark that in order for the Gibbs distribution to be well-defined, $P$ in \eqref{gibbs} must be a finite number. This can be guaranteed by requiring that the potential function $\Psi(y)$ grows quadratically when $|y|$ tends to infinity. In this study, our interest is within a bounded region. Therefore, we can assume that $\Psi(y)$ is defined with such a property at infinity. }
	%Further details of the theory are provided in \S \ref{sec:theory}.
	
	The understanding about the connections between Fokker-Planck equation and Gibbs distribution has been greatly enriched in the past few decades, thanks to the new developments in optimal transport theory. In short, defining the 2-Wasserstein distance between any two density functions $\rho^1(y)$ and $\rho^2(y)$ by
	\begin{equation*}
		\fix{(W_2(\rho^1, \rho^2))^2} = \inf_v { \int_0^1 \int_\Omega v^2 \rho dy dt},
	\end{equation*}
	where the velocity field $v(t,y)$ and density $\rho(t,y)$ satisfy the transport equation 
	\begin{equation}\label{tpe1}
	\frac{\partial \rho }{\partial t} + \nabla \cdot (v \rho) = 0,
	\end{equation}
	with boundary values given by 
	\begin{equation*}
		\rho(0,y) = \rho^1(y), \quad \rho(1,y) = \rho^2(y),
	\end{equation*}
	induces a metric in the probability density space and \fix{turns} the space into a Riemannian manifold to which one can apply various geometric operations. One of the most impactful results reveals that the Fokker-Planck equation is the gradient flow, with respect to the 2-Wasserstein metric, of a free energy given by 
	\begin{equation*}
		\mathcal{G}(\rho) = \int_\Omega \Psi (y) \rho (y) dy + \frac{1}{2} \sigma^2 \int_\Omega \rho (y) \log \rho (y) dy,
	\end{equation*}
	in which the first term is the potential energy while the second is called entropy \cite{Jordan1998}. Following the properties of gradient flow, one can prove that Gibbs distribution is the unique attractor of the Fokker-Planck equation \eqref{fpe1}, and its convergence rate to the Gibbs distribution is exponential, \fix{see Theorem 24.7 in \cite{Villani2008} for details}.  
	
	Our idea for motion planning is finding a potential such that the target shape is where the potential attains its global minima if shape formation is the task, or the target distribution is the Gibbs distribution if the goal is to move a group \fix{of} robots to a given distribution, The exponential convergence of \eqref{fpe1} to the Gibbs distribution from any initial distribution forms the basis that guarantees the success of planned motions. The potential is used in conjunction with \eqref{sde1} to create two deterministic dynamics that are used alternately to produce the trajectories for all robots. In the design, we must ensure that (a) the motions are collision{\color{blue}-}free; (b) there is no deadlock\fix{, and;} (c) the dynamics converge to the desir\fix{ed} shape. In the rest of this paper, we use shape formation as the task to illustrate our strategy. Its extension to the distribution case requires simple modifications, which will be omitted in the paper.
	
	{\bf Remark}: Besides the strategy that we propose here, there are different ways to apply optimal transport theory for motion planning. For example, one may view the robots as a collection of point mass and move them according to the transport equation \eqref{tpe1} for which the initial and target distributions are used as $\rho^1$ and $\rho^2$ respectively. This amounts to find{\color{blue}ing} a velocity $v$ while maintaining point masses throughout the optimization procedure. We do not adopt this view in this paper. Instead, we directly design dynamics based on formulation \eqref{sde1}, because the resulting algorithm is simple and efficient in implementation, yet has desirable properties that can be rigorously proved. 
	
	It is worth noting that the convergence to the Gibbs distribution for the solution of the Fokker-Planck equation does not have a direct guarantee for the convergence of the SDE \eqref{sde1} to a desirable shape. The subtlety lies in the fact that the convergence for the SDE is only in the distribution sense. The solution of \eqref{sde1} with a positive constant $\sigma$ never settles down asymptotically. To make the solution converge, one has to reduce the value of $\sigma$ gradually to zero, which is precisely the idea used in simulated annealing. However, it is well known that the reduction rate of $\sigma$ must be slower than a logarithmic function in time to avoid local traps. To speed up the convergence, we borrow ideas from intermittent diffusion \cite{Chow2013}, a stochastic strategy developed for global optimization that can improve the convergence with the probability of success increased to 1 as a geometric sequence, a rate that is much faster than logarithmic functions.  Besides, directly applying random perturbations to the motions can cause wasteful jittering effects which we want to avoid in our design.
	
	\section{Model setup}\label{sec:modelsetup}
	
	Suppose $\Gamma\subset \RR^{2}$ is a set of spatial locations that form a desired shape, and consider the trajectories of $N$ robots given by
	\[X({t}) = (X^{1}({t}), \hdots, X^{N}(t)),\] 
	where $X^{{i}}(t)$ is a curve in $\RR^{2}$ describing the position of the $i^{\text{th}}$ robot at time $t \ge 0$.  
	The objective is to produce paths $\{X(t)\}_{0\leq t\leq T}$ from an initial state $X(0)=X_{0}$ to a final state $X(T)$ such that $X(T) \in \Gamma$. Our strategy is to design modified gradient flows whose solutions prescribe the path $X(t)$ for all robots. 
	
	In order to do so, we first introduce a shape function \fix{$F(X)$ that is smooth and has a global minimum only for $X \in \Gamma$}. A convenient choice, among many candidates, is the distance function,
	\begin{align}
		F(X) =  \frac{1}{N}\sum_{i=1}^{N} \mu(X^{i}); \qquad \mu(X^i)= \min_{X'\in \Gamma} \|X^i-X'\|^{2}, \labeleq{distance}
	\end{align}
	where $\|\cdot\|$ is the Euclidean norm: $\|x\| = \sqrt{x_1^2+x_2^2}$. \fix{  Then}, $F(X)$ is a non-negative function achieving its minimum only when $\cup_{i=1}^N X^i \subset \Gamma$. Figure \ref{fig:density} illustrates the level-sets of $F(X)$ corresponding to two different target shapes. 
	
	We also introduce a penalty function $G(X)$ 
	that takes a large value when $X$ exhibits undesirable behavior. In multi-robot systems, one of the main objectives is to ensure that the trajectories are {\it collision{\color{blue}-}free}, meaning the pairwise distances $ \|X^{{i}}(t)-X^{j}(t)\|$, $j\neq i$ must be larger than a given positive value $r$, for all $t>0$. For example, we can select the \fix{penalty $G(X)$} as the following smooth, ``repelling'' function that peaks when the pairwise distances are small,
	\begin{align}
		G(X) =\begin{cases} G_0 \displaystyle \sum_{i=1}^{N} \sum_{j \neq i}  \varphi(\|X^{i}-X^{j}\|/2),& \text{if } \|X^{i}-X^{j}\| < R,\\
			0,& \text{ otherwise}
		\end{cases},
		\labeleq{repelling}
	\end{align}
	\fix{where the function $\varphi \in C^{1} (0,\infty)$ can be chosen as a {\it decreasing} function having the following properties
		\begin{align}
			%\lim_{x\rightarrow r^{+}}\varphi(x) = \infty
			%\varphi(r) \ge H_m, \labeleq{gpropr}\\
			\lim_{x\rightarrow R^{-}}\varphi(x)=\lim_{x\rightarrow R^{-}}\varphi'(x) = 0. \labeleq{gpropR}
			%\lim_{|x|\rightarrow \infty}\varphi(x)=\lim_{|x|\rightarrow \infty}\varphi'(x) = 0\labeleq{gpropinf}
	\end{align}}

	%For example: $\varphi(x) = \exp ({\frac{1}{x^2-r^2} - \frac{1}{R^2 - x^2}})$.} 
	\fix{For instance, $\varphi(x)= \frac{1}{x} \exp{(\frac{-1}{R^2 - x^2}})$ can be picked to satisfy the requirements.}
	Here, the constant $R>r$ is related to the sensing radius of each robot, and \fix{the constant $G_0$ is calibrated according to the initial positions of robots and to achieve desirable dynamics.} 
	%see \S \ref{sec:theory} for details. 
	Further constraints on the system, such as obstacle avoidance, can also be easily included. To simplify the presentation, we do not consider obstacle avoidance in this paper.  
	
	Combining the shape function \refeq{distance} with the penalty function \refeq{repelling}, we obtain the potential function
	\begin{equation}
	\Psi(X) = F(X)+G(X). \label{energy}
	\end{equation}
	Then the trajectories of the robots are primarily generated by the gradient flow that minimizes $\Psi(X)$, i.e. 
	\begin{equation}
	\frac{dX^i(t)}{dt} = - (\nabla \Psi(X(t)))_i. \label{gradient}
	\end{equation}
	Following it, the robots get to the desired shape when $F(X)=0$, while minimizing $G(X)$ helps to spread out their locations in addition to avoid collisions.

	\begin{figure}
		\begin{center}
			\includegraphics[width=.5\linewidth]{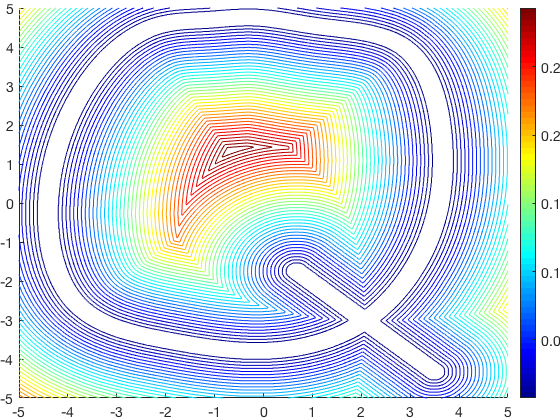}\hfill \includegraphics[width=.5\linewidth]{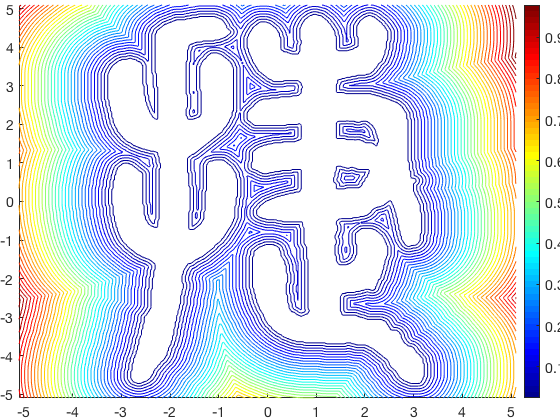}
			\caption{\fix{Surface plot and contour plot of the distance function $\mu(X^i)$  in \refeq{distance} for two different shapes $\Gamma_{j}$, $j=1,2$.}}\label{fig:density}
		\end{center}
	\end{figure}
	
	However, the path generated by such a simple gradient flow may suffer a well known shortcoming that the \fix{trajectories can get trapped in locations corresponding to} local minimizers. To overcome this limitation, we use ideas from intermittent diffusion. More precisely, we intermittently add random perturbations to \eqref{gradient}, leading to the following SDEs,
	\begin{equation}
	\begin{cases}
	dY^i(t)=-(\nabla \Psi (Y(t)))_idt+\sigma(t) dW({t}), & t>0, \\
	Y(0) = Y_{0},
	\end{cases}
	\label{SDE}
	\end{equation}
	where $W(t)$ is the standard Brownian motion in $\RR^{2}$ and $\sigma(t)$ is a piecewise constant function alternating between zero and a positive value, i.e.
	\begin{equation}
	\sigma(t) = \begin{cases}
	0 & \text{if $t \in [S_k, T_k]$} \\
	\sigma_k & \text{if $t \in [T_{k-1}, S_k]$}.
	\end{cases} \label{sigma}
	\end{equation}
	Here we partition $[0,T]$ as $\cup_{k=1}^K([T_{k-1},T_k])$ with $T_0 = 0$, $T_N = T$ and $S_k \in [T_{k-1}, T_k]$.
	
	We want to highlight that the random perturbations are added to the gradient flow
	to avoid trajectories being trapped at local minimizers. Therefore, the constant $\sigma_k$
	doesn't have to be small. This is different from the choice used in simulated annealing, in which the corresponding coefficient, also called temperature, must go to zero asymptotically. The effectiveness of random perturbations can be verified by numerical experiments and comes with guarantees based on optimal transport theory. More precisely, the solution of \eqref{SDE} converges to the global minimizer in the distribution sense according to the Gibbs distribution.
	The Gibbs distribution is an invariant measure of the system \eqref{SDE}, and $\rho^{*}(X)$ takes the largest value when $\Psi(X)$ reaches its global minimum. Further details of the theory are provided in \S \ref{sec:theory}.
	
	Unlike many other applications of SDEs, it is important to emphasize that the random portion of the solution $Y(t)$ when $t \in [T_{k-1},S_k]$ is not used as the trajectories for the robots due to inefficient jittering motions. Instead, $Y(t)$ is only computed virtually to create the vector $Y(S_k)$, denoted as $\hat{Y}$ in the rest of the paper, of intermediate positions to move the robots to. Once this position $\hat{Y}$ is computed, we define another objective function 
	\begin{equation}
	\hat{F}(X) = \frac{1}{N}\sum_{i=1}^{N}   \|X^{i}-\hat{Y}^i\|^{2}.
	\end{equation}
	Using it together with $G(X)$, we create another gradient flow 
	\begin{equation}
	\frac{dX^i(t)}{dt} = - \left(\nabla \left(\hat{F}(X(t))+G(X(t))\right)\right)_i. \label{gradient2}
	\end{equation}
	In the end, the path $X(t)$ of the robots is generated by alternating between two gradient flows \eqref{gradient} and \eqref{gradient2}. The implementation of the method is given in the next section.

	\section{Implementation}\label{sec:implementation}
	
	The gradient flows and the SDEs presented in the previous section must be solved numerically when calculating the path. We employ the simple Euler scheme to do so in this paper. More precisely, we compute  
	\begin{equation}
	X^i_{n+1} = X^i_n - \Delta t (\nabla \Psi(X_n)))_i, \label{euler1}
	\end{equation}
	where $\Delta t$ is the step size, $\Psi(X)$ takes $F(X) + G(X)$ for \eqref{gradient} and $\hat{F}(X) + G(X)$ for \eqref{gradient2} respectively. The SDEs \eqref{SDE} is discretized as 
	\begin{equation}
	Y^i_{n+1} = Y^i_n - \Delta t (\nabla \Psi(Y_n))_i + \xi_n \sqrt{\Delta t}, \label{euler2}
	\end{equation}
	where $\xi_n \in \RR^{2}$ is a normally distributed random vector generated at each iteration. 
	
	%The main philosophy of our approach is that random movements can be used to discover new descent directions and ensure the robots are uniformly spaced. Algorithm \ref{algorithm} implements 
	As mentioned in the previous section, the path is generated by alternating between \eqref{gradient} and \eqref{gradient2}. This is implemented by repeating a 2-step strategy. In the first step, the robots are %initialized at some feasible starting locations and 
	moved, using \eqref{gradient2}, toward temporary destinations computed by a simulation of \eqref{euler2}. After the temporary locations are reached, the second step has the robots follow
	%begins by following 
	\eqref{gradient}
	%which drives the robots directly 
	toward the desired shape. The robots then repeat the two steps until the task is accomplished. Details are presented in Algorithm \ref{algorithm}, and the computed descent directions in two different iterations are plotted in Figure \ref{fig:quiver}. Again, we want to re-iterate that
	$Y_n$ is not part of the trajectories. They are 
	computed only virtually to generated the intermediate positions $\hat{Y}$. %in which $Y_n$ is used for the solution of \eqref{euler2}, because this is not part of the trajectories of the robots, and they are computed only offline to generate the intermediate locations $\hat{Y}$. %achieving convergence in the spirit of Intermittent Diffusion.

	%In both stages, a robot may be drawn in a direction that moves the robot to the shape and also to other robots. Although this increases the risk of collision between the robots, we show in \S\ref{sec:theory} that by carefully designing the penalty function $G(X)$, we can guarantee collisions will not happen in either stage of the algorithm.

	\begin{algorithm}
		\floatname{algorithm}{Algorithm}\caption{Intermittent diffusion based motion-planning}
		\renewcommand{\thealgorithm}{}
		\label{algorithm}
		\begin{algorithmic}[1]
			
			\STATE{\bf Initialization}: Given a feasible initial configuration $X_0$ in a computational domain $\Omega = [-M, M]^d$, and a tolerance $\epsilon>0$. Pick ID parameters $(\alpha, \beta)$, a small number $\tau >0$ and a positive integer $s_{max}$ as the maximum iteration number in step 3.  Set $X_{opt}=X_{0}$, $n=0$. %$k=0$, $n_k = 1$ and $X_{n_k} = X_{opt}$. 
			\STATE {\bf Virtual diffusion:} If $\Psi(X_{opt})>\epsilon$, set $k=k+1, m=0$. Generate two random positive numbers $d, t \in (0,1)$ and set $\sigma=\alpha d$ and $V =\beta t $. Define $Y^{0}=X_{opt}$, and perform the following simulation for $m\Delta t\leq {V}$:
			\begin{align}{\color{blue}
					Y_{m+1}^{i}=Y_{m}^{i}- (\frac{1}{N}\nabla \mu (Y_{m}^{i}) + (\nabla  G (Y_{m}))_{i} )\Delta t + \sigma \xi^i_m \sqrt{\Delta t},}\end{align}
			in which $\xi^i_m$ is a standard normal random variable. Record the final locations $\hat{Y}=Y_{m}$.
			\STATE {\bf Gradient descent toward $\hat{Y}$:} For $1 \leq i \leq N$, define 
			\begin{align*}
				\hat{F}_{i}(X^{i}) &= \frac{1}{N}\|X^{i} - \hat{Y}^{i}\|^{2}. 
			\end{align*}
			Set $s_0=n$. Compute the following iterations until $\max_{i}\|X^{i}_{n+1}-X^{i}_{n}\|<\tau $ or $n>s_0+s_{max}$, \begin{align*}
				X_{n+1}^{i}&=X_{n}^{i}-   ( \nabla \hat{F}_{i} (X^{i}_{n})+(\nabla G (X_{n}))_{i} )  \Delta t. \end{align*}
			If { $X^{i}_{n+1} \not\in\Omega$}, set $X^{i}_{n+1} = X^{i}_{n+1}-2\text{sgn}(X^{i}_{n+1})\mod(\|X^{i}_{n+1}\|_\infty, M)$.

			\STATE {\bf Gradient descent toward $\Gamma$:} Calculate the following iterations until $\|X_{n+1}^{i}-X_{n}^{i}\|<\tau $:
			\fix{
				\begin{align*}
					X_{n+1}^{i}&=X_{n}^{i}-  ( \frac{1}{N}\nabla \mu (X^i_{n}) + (\nabla  G (X_{n}))_{i} )  \Delta t \end{align*}}
			If { $X^{i}_{n+1} \not\in\Omega$}, set $X^{i}_{n+1} = X^{i}_{n+1}-2\text{sgn}(X^{i}_{n+1})\mod(\|X^{i}_{n+1}\|_\infty, M)$.
			If $\Psi({X}_{n})<\Psi(X_{opt})$, set $X_{opt}={X}_{n}$.

			\STATE Repeat steps 2,3, and 4 until $\Psi(X)<\epsilon$. 
			
		\end{algorithmic}
	\end{algorithm}

	This algorithm is a practical modification of the theory developed in the later sections. The main difference is in the diffusion stage of the algorithm, the aim is to produce trajectories that are influenced by both the desired shape and random noise. This procedure is performed offline to save resources; a random path simulated by a robot may be costly even if the ending location is close to the starting position of the robot. Instead of a random path, it is more efficient for the robots to move directly toward these temporary locations. Therefore, in the implementation, each robot moves to its computed destination following a gradient flow, without regard for the shape density function. By doing this, the energy of the system will possibly be increased. This is reflected in the variance of the energy functional in Figure \ref{fig:energy}. 
	
	In \S\ref{sec:theory}, we shall prove that the Algorithm \ref{algorithm} generates \fix{a} guaranteed collision\fix{-}free path for each robot that converges to the desired shape. Before doing so, we present a few numerical experiments to illustrate the performance in the next section. 
	
	\begin{figure}
		\begin{center}
			\includegraphics[width=.5\linewidth]{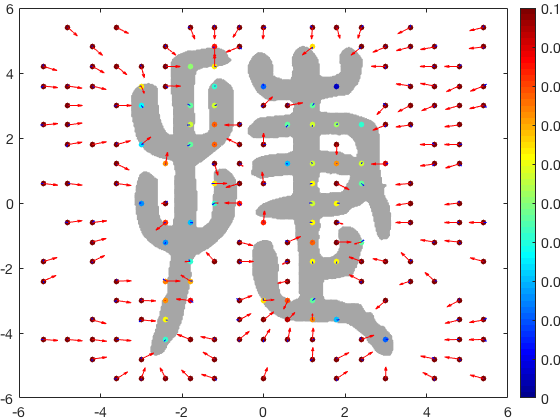}\hfill \includegraphics[width=.5\linewidth]{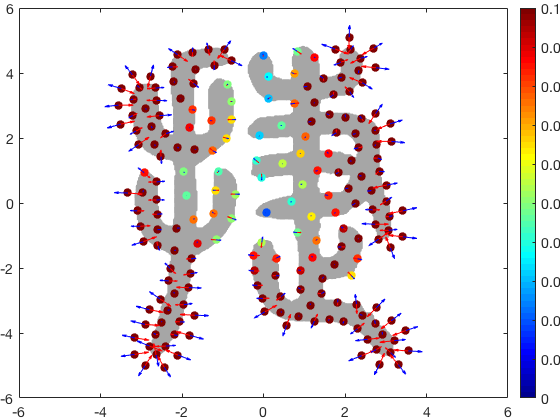}
			\caption{\fix{The arrows indicate the directions $(\nabla G(X))_{i}$ (blue) and $(\nabla F(X))_i$ (red) at the beginning of the iterations (left) and midway through the iterations (right).}} \label{fig:quiver}
		\end{center}
	\end{figure}

	\section{Numerical Results}\label{sec:numerics}

	\begin{center}

		\begin{table}
			\centering
			\caption{Simulation parameters}
			\pgfplotstabletypeset[normal, string type,
			columns/Value/.style= {column name={ Value}, column type=l},
			]{ %
				Symbol & Description  &Value\\ 
				{$G_{0}$}	& Repelling function amplitude  &{.01}  \\ 
				{$R$} & Robot sensor radius  & {$10r$} \\ 
				{$\Delta t$}	& Time step &  {$.1 r$}  \\ 		
				{$\alpha $}	& ID Diffusion scale  &  {$r$}  \\ 
				{$\beta$}  & ID Time scale & {10} \\
				{$M$}  & Computational domain size &  6 \\ 
			}
		\end{table}
	\end{center}
	
	In our numerical experiments, we confine the robots in a square domain given by $\Omega = [-M, M] \times [-M, M]$. We \fix{assume} that each robot has knowledge of its location $X^i$, the gradient of the shape function  $(\nabla F (X))_{i}=\nabla F_{i} (X)$\fix{,} and a sensing radius $R$, meaning that a robot can only detect other robots if they are within a circular region centered at $X^i$ with radius $R$. This $R$ is also the parameter we use in $G(X)$: 
	\[ G(X) = G_{0}\sum_{i=1}^{N} \sum_{\substack{j=1\\j\neq i}}^{N} \cot(\pi/2(\|X^{i} - X^{j}\|^{2})/R^{2}).\]
	%drij_dx = -2*(X(1,j)-X(1, ind)); 
	%drij_dy = -2*(X(2,j)-X(2, ind)); 
	
	%dpot_dx = -cot(pi/2*(rij)/R).*csc(pi/2*(rij)/R).*drij_dx*pi*R/2;
	%dpot_dy = -cot(pi/2*(rij)/R).*csc(pi/2*(rij)/R).*drij_dy*pi*R/2;
	We note that this choice of $G$ is different from the function we mentioned in Section \ref{sec:modelsetup}, demonstrating the flexibility of choosing $G$.

	We evaluate the success of the algorithm by determining if the robots are in the desired region, distributed uniformly, and if the nearest\fix{-}neighbors difference is minimized.

	The numerical tests are performed on two shapes. The first shape consisting of points in the set $\Gamma_{1}$, corresponds to a handwritten letter `Q'. In this case, the closed loop feature poses difficulties. The second shape consisting of points in the set $\Gamma_{2}$, is a Chinese character, \fix{pronounced} as `JIE', with multiple complicated strokes and two disconnected components. The initial positions for the robots are either clustered at a corner (demonstrated for shape $\Gamma_1$) or randomly distributed in the domain (demonstrated for shape $\Gamma_2$). 
	The time evolution, shown in two cases in Figure \ref{fig:Q-corner-Jie-rand}, indicates that the robot trajectories driven by our proposed algorithm drive the robots to the desired shapes without suffering from congestion or getting stuck at local minimizers.  

	To test the scalability of our algorithm, we varied the size of the robot radius (resulting in different values of $N$). The choice of $N$ is based on a-priori knowledge that there is a global minimum with $N$ robots positioned entirely in the desired shape, determined by trial and error.

	\begin{table}\caption{Final objective function value for both \fix{target} shapes and varied \fix{robot radii} ($r$), starting from a random initialization.}
		\centering
		\pgfplotstabletypeset[normal,
		columns/r/.style={string type, column type=r, column name= $r$},
		columns/nRobots/.style={precision=0, column type=r, column name= $N$}, columns/energyID/.style={column name=$\Psi(X_{ID})$},  
		columns/energyGD/.style={column name=$\Psi(X_{GD})$}
		]{
			r &nRobots  &energyID &energyGD  \\
			\topmidheader{4}{`Q' }
			.1 &50&	    0.02537064&	 0.02808049\\
			.05 &150&	0.02905039&		  0.02930737\\
			.01 &1000&		0.00065954&  0.00222986\\
			\midheader{4}{`Jie' }
			.1 &200	  	  &0.12890581			&.13272578\\
			.05 &400	&	0.05963590		&0.06544953\\
			.01 &3000	&0.00701573 		&0.01647748\\
		}
	\end{table}

	From the experiments, we observe that the faster convergence occurs with a random initial configuration that minimizes congestion from the start and provides the robot group immediate access to all sides of the target shape. When robots are initialized in a cluster near one end of the domain, they risk stagnating near the corner of the shape and missing entire sections of the shape unless intermittent diffusion becomes active.

	\begin{figure}
		\caption{Time evolution generated by the Intermittent Diffusion based Algorithm \ref{algorithm}. The color of the $i$th robot indicates the value of $\fix{\mu(X^{i})}$.}\label{fig:Q-corner-Jie-rand}
		\begin{center}
			\includegraphics[width=.5\linewidth]{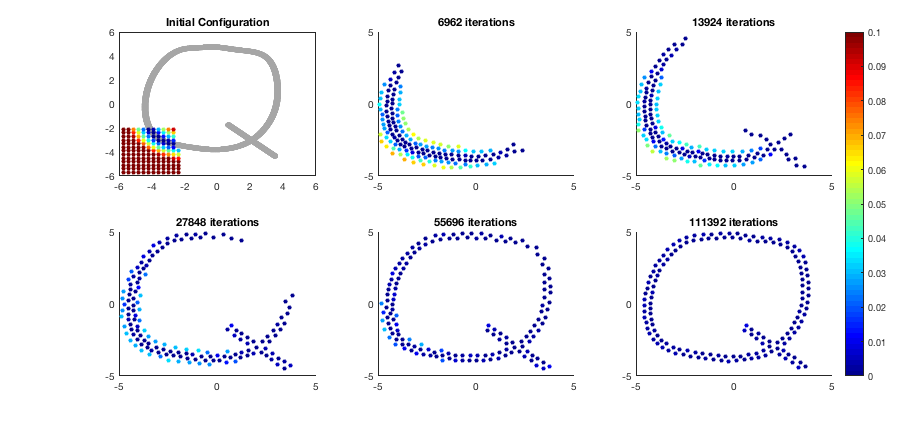}\includegraphics[width=.5\linewidth]{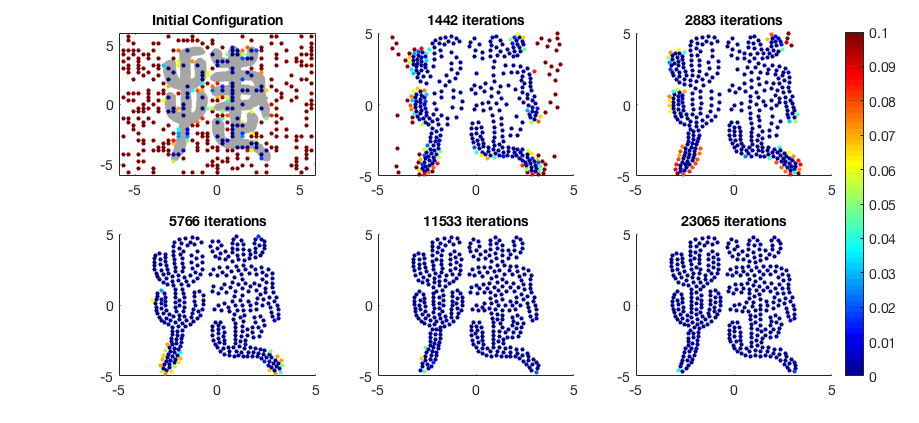}
		\end{center}
	\end{figure}

	We compared our method to a standard gradient descent with \fix{the} potential $\Psi$, which is the result of APF. From the energy plots shown in Figure \ref{fig:energy}, it is clear that gradient descent (APF) alone leaves \fix{some} robots trapped in local minima. After about 2000 iterations, the congestion caused by the gradient descent iterations is not resolved. Furthermore, the energy decays at a much slower rate than in the iterations produced by \fix{Algorithm \ref{algorithm}}.

	\begin{figure}
		\caption{Plots of the potential function $\psi(X)$ versus the iteration number  for both the gradient descent algorithm and \fix{Algorithm \ref{algorithm}}. \fix{Robots are initialized in the corner of the domain (for $`Q')$ and randomly (for `Jie') and $r=.05$}. }\label{fig:energy}
		\begin{center}
			\includegraphics[width=.5\linewidth]{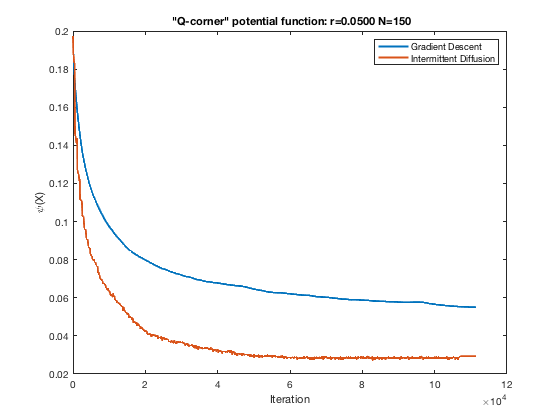}\hfill\includegraphics[width=.5\linewidth]{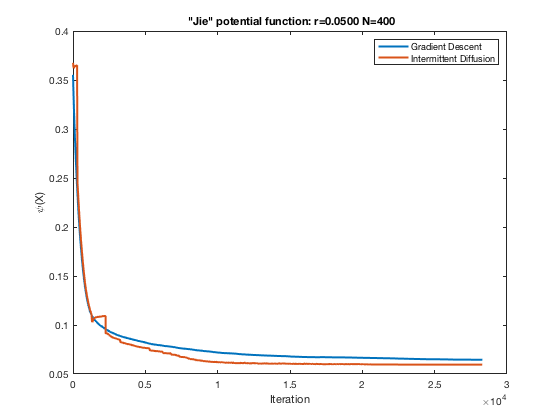}
		\end{center}
	\end{figure}

	\section{Mathematical Underpinnings}\label{sec:theory}
	
	In this section, we justify theoretically that the generated path using the proposed method can achieve the desired shape while maintaining collision{\color{blue}-}free motions. We start with the collision{\color{blue}-}free property first. 
	
	Our model determines the trajectories of the robots based on
	two different gradient flows, \eqref{gradient} and \eqref{gradient2} respectively. In both cases, the energy functional $\Psi(X)$ consists of a potential $F(X)$ (or $\hat{F}(X)$) that attracts the robots to the destinations, and the repelling function $G(X)$ that keeps them away from each other. In our theoretical study, it suffices to consider a general potential $F$ that is differentiable, \fix{is} bounded, and has minimizers only at the desired regions. In this general setting, the governing equation for the path is still given by the gradient flow presented in \eqref{gradient}. %So we show that the trajectories are collision{\color{blue}-}free in both continuous and discrete cases. 
	
	%For a fixed robot radius $r$ and sensing radius $R$, the energy landscape $\Psi: \mathcal{X} \rightarrow \RR$ is defined by $\Psi(X) = F(X)+G(X)$, where
	%\begin{align}
	%F(X; \Gamma) &=  \frac{1}{2NM}\sum_{i=1}^{N}d(X^{{i}}, \Gamma),\\
	%G(X) &=\frac{1}{2N(N-1)}\sum_{i=1}^{N}\sum_{j\neq i} \exp( - (\| X^{{i}} - X^{j}\|^{2}-r^{2})/(2R^{2}))\labeleq{G}
	%\end{align}
	
	%In the second gradient descent, the function $\hat{\Psi}(X) = \hat{F}(X)+G(X)$, where $\hat{F} = F(X; \Gamma')$, where $\Gamma'$ is the set of locations produced by the virtual simulation in Step 2. The collision avoidance should be designed to handle both situations, so it only suffices to consider a general potential $F$ that is differentiable and bounded.

	\subsection{Continuous time collision avoidance}\label{sec:continuouscollisionavoidance}
	Recall that the location of the $i^{\text{th}}$ robot is given by $X^{{i}}$, and the set of admissible robot coordinates is $\mathcal{X}$, where
	\begin{align*}
		\fix{\mathcal{X}=\{(X^{1}, \hdots, X^{N}) \mid X^i\in \Omega, \inf_{ i,j\neq i} \|X^{{i}} - X^{j}\|> r >0\}.}
	\end{align*}
	We note that the repelling function $G(X)$ satisfies \refeq{repelling}, for a function $\varphi$ satisfying  \refeq{gpropR}, \fix{which implies $G(X)$ is a $C^1$ function on $\mathcal{X}$.}
	
	%	\begin{enumerate}
	%		\item $\lim\inf_{ i,j\neq i; \|X^{{i}} - X^{j}\|\rightarrow r^{+}} G(X)= +\infty$
	%		\item $\lim\sup_{i,j\neq i; \|X^{{i}} - X^{j}\|\rightarrow R^{-}} G(X)= 0$.
	%	\end{enumerate}
	
	\fix{Let $X_0 \in \mathcal{X}$ be the initial robot locations with smallest pairwise distance given by
		$$
		m_0 := \min_{i \neq j} \{\|X_0^i - X_0^j\|\} >r.
		$$
		Suppose that the decreasing function $\varphi(x)$ satisfies
		\begin{equation} \label{phicond}
		G_0 \varphi(r) > E_0 := \Psi(X_0).
		\end{equation}
		This can be achieved for $\varphi(x)$ satisfying the property $$\varphi(r)> N^2 \varphi(m_0) + \frac{2M}{G_0}.$$
		%Therefore, $G(X)$ can be arbitrarily large if the distance between a single pair of robots is small enough. Let $m>r$ be the smallest allowable distance between two robots, and define the constant $$E_m = G_0\varphi(m).$$ This implies that we have must have $G(X)>E_m$, if there is at least one pair of robots that have a pairwise distance less than $m$.}
	}
	
	Then we have the following 
	theorem.
	%Denoting the spatial gradient by $\nabla=\nabla_{(x, y)}$, 
	%The gradient flow is then given by
	
	%\begin{align}
	%\frac{dX}{dt}=-\nabla \Psi (X), \qquad X(0)=(x^{0}, y^{0}).\label{eq:gradientflow}
	%\end{align}

	\fix{
		\begin{theorem}  \label{thm1}
			For any trajectory $X(t)=(X^{1}(t), \hdots, X^{N}(t))$, $N>1$ generated by \eqref{gradient} with initial position $X(0)=X_{0}\in \mathcal{X}$, the inequality \begin{equation}\label{eq:minDistance}
			\inf_{i,j\neq i} \|X^{{i}}(t) - X^{j}(t)\|^{2}  >  r^{2}
			%+{2R^{2}} \ln E_0
			\end{equation}
			holds for all $t>0$.
		\end{theorem}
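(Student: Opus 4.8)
The plan is to treat $\Psi$ as a Lyapunov function for the gradient flow \eqref{gradient} and to exploit the fact that $G$, hence $\Psi$, becomes arbitrarily large near the collision set.

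\emph{Step 1: the energy is non-increasing.} Along any solution that stays in the open admissible set $\mathcal{X}$ (where $\Psi$ is smooth), one computes
\[
\frac{d}{dt}\Psi(X(t)) = \sum_{i=1}^{N} (\nabla\Psi(X(t)))_i\cdot\dot X^i(t) = -\sum_{i=1}^{N}\|(\nabla\Psi(X(t)))_i\|^{2} \le 0 .
\]
Hence $\Psi(X(t)) \le \Psi(X_0) = E_0$ for all $t$ in the interval of existence, and since $F\ge 0$ by \refeq{distance} this yields the pointwise bound $G(X(t)) \le \Psi(X(t)) \le E_0 < E_m$.

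\emph{Step 2: turn this into a distance barrier.} The hypothesis $E_0 < E_m$ already forces $\min_{i\neq j}\|X_0^i - X_0^j\| \ge m$, since if some initial pair were closer than $m$ we would have $G(X_0) > E_m$, hence $\Psi(X_0) > E_m$, a contradiction. Now let $g(t) = \min_{i\neq j}\|X^i(t)-X^j(t)\|$, a genuine minimum because there are finitely many robots, and continuous in $t$. Suppose $g(t^\ast) < m$ for some $t^\ast$; since $g(0)\ge m$, the intermediate value theorem gives $s\in(0,t^\ast]$ with $g(s) = (r+m)/2 \in (r,m)$, so at time $s$ some pair is strictly closer than $m$, and by the property of $G$ noted just before the theorem, $\Psi(X(s))\ge G(X(s)) > E_m$, contradicting Step 1. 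Therefore $g(t)\ge m > r$ for all $t$, and squaring gives \eqref{eq:minDistance}, in fact with strict inequality.

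\emph{Main obstacle.} The delicate point is that this must be made non-circular: the identity in Step 1 is only valid while $X(t)$ lies in the smooth region $\mathcal{X}$, which is exactly what we are proving, so the argument has to be run on the \emph{maximal} interval $[0,T_{\max})$ of existence inside $\mathcal{X}$. On that interval Steps 1--2 give $g(t)\ge m$, so $X(t)$ stays in the set $\{\min_{i\neq j}\|X^i-X^j\|\ge m\}$, on which $\nabla G$ is bounded and $\nabla\Psi$ is Lipschitz; combined with the standard control on $\nabla F$ on sublevel sets of $F$, this rules out finite-time blow-up and escape from $\mathcal{X}$, so $T_{\max}=\infty$ and the bound holds for all $t>0$. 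Apart from this bootstrapping, the proof is just a continuity-plus-contradiction argument requiring no delicate estimates.
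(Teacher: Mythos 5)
Your proof is correct and follows essentially the same route as the paper's: $\Psi$ is non-increasing along the flow because $\tfrac{d}{dt}\Psi(X(t)) = -\sum_{i}\|(\nabla\Psi(X(t)))_i\|^{2}\le 0$, and a configuration with a too-small pairwise separation would force $\Psi\ge G> E_m>E_0$, a contradiction. Your two refinements are genuine improvements over the printed proof: the paper evaluates $G$ directly at a hypothetical time where some pair is already closer than $r$ (where, for the stated $\varphi$, the lower bound $G\ge G_0\varphi(m)$ is not immediate from the formula), whereas your intermediate-value argument locates a time at which the minimal separation lies in $(r,m)$, exactly where that bound legitimately applies; and the paper silently differentiates $\Psi$ along a solution whose global existence in $\mathcal{X}$ is part of what is being proved, a circularity your maximal-interval bootstrap resolves. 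One small slip: the intermediate value theorem only yields $s$ with $g(s)=(r+m)/2$ when $g(t^{*})\le (r+m)/2$; if instead $g(t^{*})\in\bigl((r+m)/2,\,m\bigr)$ you should simply take $s=t^{*}$, which already gives a separation in $(r,m)$ --- a trivial case split worth stating.
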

		\begin{proof}
			The function $\Psi (X(t))$ is non-increasing along the solution of \eqref{gradient} since it satisfies
			\begin{align*}
				\frac{d}{dt}\left(\Psi(X)\right)& = \sum_{i=1}^N(\nabla \Psi(X))_i \cdot \frac{dX^i}{dt}\\
				&= -\sum_{i=1}^N\|(\nabla \Psi(X))_i\|^{2} \leq 0.
			\end{align*}
			
			%Let us denote the initial energy as $E_{0} =\Psi(X(0))$, for an arbitrary admissible initial conditions $X^{0}\in \mathcal{X}$. Without losing of generality, we assume $E_0=1$ because we can always  
			
			Assume there is a time $t^{*}>0$ such that $\|X^{{i}}(t^{*})-X^{j}(t^{*})\|^{2} \leq {r^{2}}$ for some $i, j\neq i$, then 
			\begin{align*}
				\Psi(X(t^{*})) &= F(X(t^{*}))+G(X(t^{*}))
				\geq G_0 \varphi(r) > {E_{0}} = \Psi({X_0}).
			\end{align*}
			Here we used that $F$ is non-negative by construction. This is a contradiction, because $\Psi(X(t))$ is non-increasing, so we must have $\Psi(X({t}^{*})) \le E_{0}$.

		\end{proof}
	}
	%%%%%%%%%%%%%%%%%%
	%(I AM NOT SURE THAT I UNDERSTAND THE FOLLOWING PARAGRAPH).
	%We note that in practice, the scaling on $G$ given by \eqref{repelling} may be too pessimistic. In practice, the effects of a few robots within the sensing distance contribute to the sum. Therefore, in the simulations we scaled both $F$ and $G$ by the same amount.
	
	\subsection{Discrete time collision avoidance}\label{sec:discretecollisionavoidance}

	Equation \eqref{gradient} and \eqref{gradient2} are solved in discrete time using the iterations
	\begin{align}
		X^i_{n+1} &= X^i_{n} - \left(\nabla\Psi(X_{n})\right)_i \Delta t,  \labeleq{gditerates}
	\end{align}
	where $X_{n} \simeq X({t_{n}})$ and $t_{n}=n\Delta t$ for some \fix{fixed time step $\Delta t>0$}. It is known that the Euler scheme converges to the continuous solution if $\nabla \Psi$ is $L-$Lipschitz continuous in space. This ensures no collision in the discrete case when the step size is small enough. In the next theorem, we present such a result, and prove it by using a standard argument from \cite{Nesterov2004}.

	\begin{theorem} \label{thm2}
		Suppose $\Psi\in C^{1}(\mathcal{X})$ is a positive function that is bounded below and $\nabla\Psi$ is $L-$ Lipschitz continuous in space. Then, if $\Delta t \leq \frac{1}{L}$, one step of the gradient method \refeq{gditerates} will not increase the objective function $\Psi$, that is $\Psi(X_{n+1})\leq \Psi(X_{n})$.
	\end{theorem}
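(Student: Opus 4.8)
The plan is to invoke the standard ``descent lemma'' for functions with Lipschitz gradient, following the argument in \cite{Nesterov2004}. The key inequality I would establish first is the quadratic upper bound: for any two points $X$ and $Y$ (in a convex subset of $\mathcal{X}$ containing the relevant iterates),
\[
\Psi(Y) \le \Psi(X) + \langle \nabla\Psi(X),\, Y-X\rangle + \frac{L}{2}\,\|Y-X\|^2 .
\]
To prove this I would write $\Psi(Y)-\Psi(X) = \int_0^1 \langle \nabla\Psi(X+\tau(Y-X)),\, Y-X\rangle\, d\tau$ by the fundamental theorem of calculus along the segment joining $X$ and $Y$, subtract $\langle\nabla\Psi(X),\,Y-X\rangle$ from both sides, and bound the integrand with Cauchy--Schwarz together with the $L$-Lipschitz property, namely $\|\nabla\Psi(X+\tau(Y-X)) - \nabla\Psi(X)\| \le L\tau\,\|Y-X\|$; integrating $\int_0^1 L\tau\,d\tau = L/2$ gives the claimed bound.

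Next I would specialize the inequality to $X=X_n$ and $Y=X_{n+1}=X_n-\Delta t\,(\nabla\Psi(X_n))$, so that $Y-X = -\Delta t\,\nabla\Psi(X_n)$. The cross term then equals $-\Delta t\,\|\nabla\Psi(X_n)\|^2$ and the quadratic term equals $\tfrac{L\Delta t^2}{2}\|\nabla\Psi(X_n)\|^2$, yielding
\[
\Psi(X_{n+1}) \le \Psi(X_n) - \Delta t\Bigl(1 - \frac{L\Delta t}{2}\Bigr)\|\nabla\Psi(X_n)\|^2 .
\]
Under the hypothesis $\Delta t \le 1/L$ we have $1 - \tfrac{L\Delta t}{2} \ge \tfrac12 > 0$, so the subtracted quantity is non-negative and $\Psi(X_{n+1}) \le \Psi(X_n)$, which is exactly the conclusion. (Note that the positivity and lower-boundedness of $\Psi$ are not needed for this monotonicity statement itself; they would enter only when one iterates the bound to deduce convergence of $\|\nabla\Psi(X_n)\|$.)

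I do not expect a genuine obstacle here. The only point requiring a little care is that the segment $X_n+\tau(X_{n+1}-X_n)$ must lie in a set on which $\nabla\Psi$ is $L$-Lipschitz; since $\mathcal{X}$ is not itself convex, I would either restrict attention to a convex neighborhood of the trajectory (which is all the proof uses) or note that for step sizes small enough the relevant segments stay within such a neighborhood. A secondary bookkeeping point is the borderline case $\Delta t = 1/L$, which is still covered since $1-\tfrac12 = \tfrac12 \ge 0$. One may also observe that the same computation gives strict decrease unless $\nabla\Psi(X_n)=0$, the discrete analogue of the energy-dissipation identity used in the proof of Theorem~\ref{thm1}.
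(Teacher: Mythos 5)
Your proposal is correct and follows essentially the same route as the paper: the descent lemma obtained by integrating $\nabla\Psi$ along the segment, bounding the remainder via the Lipschitz property to get the $\frac{L}{2}\|Y-X\|^2$ term, and then specializing to $Y = X_n - \Delta t\,\nabla\Psi(X_n)$ to obtain $\Psi(X_{n+1}) \le \Psi(X_n) - \Delta t\bigl(1-\tfrac{L\Delta t}{2}\bigr)\|\nabla\Psi(X_n)\|^2$. Your added remarks (the convexity of the segment within $\mathcal{X}$, and the observation that the conclusion actually holds under the weaker condition $\Delta t \le 2/L$, which is what the paper's final line also notes) are sound refinements of the same argument rather than a different approach.
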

	
	\begin{proof} Denote the Euclidean inner product by $\langle X, Z\rangle = \left(\sum_{i=1}^N X^i\cdot Z^i\right)^{1/2}$ For $X, Z\in \mathcal{X}$, we can express $\Psi (Z)- \Psi(X)$ by
		
		\begin{align*}
			\Psi (Z)- \Psi(X) &=\int_{0}^{1} \langle \nabla \Psi(X + \tau(Z-X)), Z-X\rangle d\tau \\
			&= \langle \nabla \Psi(X), Z-X\rangle
			+\int_{0}^{1} \langle \nabla \Psi(X + \tau(Z-X)) - \nabla \Psi(X), Z-X\rangle d\tau . 
		\end{align*}
		This results in 
		\begin{align*}
			\Psi(Z) - \Psi(X) -\langle \nabla \Psi(X), Z-X\rangle
			&= \int_{0}^{1} \langle \nabla \Psi(X + \tau(Z-X)) - \nabla \Psi(X), Z-X\rangle d\tau \\
			&	\leq \int_{0}^{1}  \sum_{i=1}^N\| (\nabla \Psi(X + \tau(Z-X))_i) - (\nabla \Psi(X))_i\| \sum_{i=1}^N \| Z^i-X^i \| d\tau \\
			\leq   \int_{0}^{1}& L \tau \sum_{i=1}^N \| Z^i-X^i\|^2 d\tau =\frac{L}{2}\sum_{i=1}^N \| Z^i-X^i\|^{2}.
		\end{align*}
		Taking $Z^i = X^i_{n+1} = X_n^i - (\nabla \Psi (X_n))_i \Delta t$, we have
		\begin{align*}
			\Psi (X_{n+1}) &\leq \Psi(X_n) -\Delta t \sum_{i=1}^N\| (\nabla \Psi(X_n))_i\|^{2} + \frac{L\Delta t^{2}}{2}\sum_{i=1}^N \|(\nabla \Psi (X_n))_i\|^{2}\\
			& =  \Psi(X_n) -\Delta t (1- \frac{L\Delta t}{2})\sum_{i=1}^N \|(\nabla \Psi (X_n))_i\|^{2}.
		\end{align*}
		Therefore $\Psi (X_{n+1})\leq \Psi(X_n)$ if $\Delta t\leq \frac{2}{L}$.
	\end{proof}
	
	\fix{We remark that $\nabla \Psi (X) = F(X) + G(X)$, with $G(X)$ being defined through $\varphi(x)$, satisfies the $L$-Lipschitz condition in the domain of interest, because $\varphi(x)$ is a $C^1$ function on the closed interval $[r,2M]$. The Lipschitz constant $L$ depends on the choice of $\varphi$, the size of computational domain $\Omega$, and the number of robots in the group.
		
		\begin{corollary}
			The discrete trajectory $X_n$ computed by \eqref{euler1} satisfies 
			\begin{equation}\label{eq:minDistance2}
			\inf_{i \neq j} \|X_n^{{i}} - X_n^{j}\|^{2}  >  r^{2},
			\end{equation}
			for all $n \geq 0$, provided $E_0 = \Psi(X_0) < G_0\varphi(m_0)$. 
		\end{corollary}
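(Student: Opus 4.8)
\emph{Proof proposal.} The plan is to combine Theorem~\ref{thm2}, which guarantees that a single Euler step with $\Delta t \le 1/L$ does not increase $\Psi$, with the barrier argument already used at the end of the proof of Theorem~\ref{thm1}. The first thing to settle is on which region $\nabla\Psi$ is actually Lipschitz, since $\nabla G$ blows up near collisions: the hypothesis $E_0 = \Psi(X_0) < E_m$ is exactly what rules this out. Because $\Psi = F + G$ with $F\ge 0$ and $G$ obeys the monotone barrier property recorded before Theorem~\ref{thm1} (so $G(X) > E_m$ as soon as some pair of robots is within distance $m$), the sublevel set $\mathcal S = \{X\in\mathcal X : \Psi(X)\le E_0\}$ stays a definite distance away from the collision set; on a neighborhood of $\mathcal S$ the map $\nabla\Psi$ is therefore bounded and $L$-Lipschitz for some finite $L$, and we fix $\Delta t \le 1/L$.

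I would then induct on $n$. If $X_n\in\mathcal S$, Theorem~\ref{thm2} applied to the Euler step \eqref{euler1} gives $\Psi(X_{n+1})\le\Psi(X_n)\le E_0$, hence $X_{n+1}\in\mathcal S$; since $X_0\in\mathcal S$ by hypothesis, $\Psi(X_n)\le E_0$ for all $n\ge 0$. Now suppose for contradiction that $\inf_{i\neq j}\|X_n^i - X_n^j\|^2 < r^2$ for some $n$. Repeating the last line of the proof of Theorem~\ref{thm1}, the barrier property forces $\Psi(X_n)\ge G(X_n)\ge E_m > E_0$, contradicting the bound just obtained. Hence \refeq{minDistance2} holds for every $n\ge 0$ (with the reading $i\neq j$).

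The step I expect to be the real obstacle is the bootstrap hidden in that induction. Theorem~\ref{thm2} is established by integrating $\nabla\Psi$ along the straight segment from $X_n$ to $X_{n+1}$, so one must ensure that this entire segment --- not just its endpoints --- lies in the region where the Lipschitz estimate is valid; a priori a large Euler step could poke into the singular zone near a near-collision even when $X_n$ and $X_{n+1}$ both lie in $\mathcal S$. Closing this gap requires either bounding the step length in terms of $E_m - E_0$ (so the segment cannot escape a slightly enlarged sublevel set), or using that the repelling force is outward-pointing and hence cannot push a pair across the barrier; I would make one of these quantitative and absorb it into the choice of $L$, equivalently of $\Delta t$.
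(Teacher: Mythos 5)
Your proposal follows the same route as the paper: the paper's entire proof is the one-line remark that the corollary ``follows directly from the proof of Theorem~\ref{thm1},'' i.e.\ exactly your induction --- Theorem~\ref{thm2} gives $\Psi(X_{n+1})\le\Psi(X_n)\le E_0<E_m$, and the barrier property of $G$ then excludes any pair from coming within distance $r$. So the core argument is correct and matches.

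What you add, and the paper omits entirely, is the discussion of \emph{where} $\nabla\Psi$ is actually $L$-Lipschitz. The paper simply hypothesizes in Theorem~\ref{thm2} that $\nabla\Psi$ is $L$-Lipschitz on $\mathcal{X}$, which is false for the chosen $G$ (it blows up as pairwise distances approach $r$), and never reconciles this with the corollary. Your restriction to a neighborhood of the sublevel set $\mathcal S=\{\Psi\le E_0\}$, and your observation that the proof of Theorem~\ref{thm2} integrates $\nabla\Psi$ along the whole segment from $X_n$ to $X_{n+1}$ --- so the Lipschitz bound must hold on that segment, not merely at its endpoints --- identify a genuine gap in the paper's treatment rather than in yours. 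Your proposed fix (bound the Euler step length by $\Delta t\sup_{\mathcal S}\|\nabla\Psi\|$ so the segment stays in a slightly enlarged sublevel set on which $L$ is computed) is the standard way to close it and is worth writing out if one wants a fully rigorous statement; the paper does not do this.
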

		
		The proof of this corollary follows directly from the proof of Theorem \ref{thm1} and the result of Theorem \ref{thm2}. }
	
	%Let's try to calculate $L$.
	%\begin{align}
	%F(X; \Gamma) &=  \frac{1}{2NM}\sum_{i=1}^{N}d(X^{{i}}, \Gamma),\\
	%\left(\nabla F(X)\right)^{i} &= \frac{1}{2NM}  \nabla_{(x,y)} d(X^{j}, \Gamma), \\
	%G(X) &=\frac{1}{2N(N-1)}\sum_{i=1}^{N}\sum_{k\neq i} \exp(- (\| X^{{i}} - X^{k}\|^{2}-r^{2})/(2R^{2}))\\
	%(\nabla G(X))^{i} &= \frac{X^{{i}}-X^{j} }{N(N-1)R^{2}} \sum_{i\neq j} \exp(-(\| X^{{i}} - X^{j}\|^{2} - r^{2})/(2R^{2}))
	%\end{align}
	
	%Therefore 
	%\begin{align*}
	%\sum_{j=1}^{N} \| (\nabla \Psi (X))^{i} - (\nabla \Psi (Y))^{i}\|^{2}  &\leq \sum_{j=1}^{N} \|  \frac{1}{2NM} \left( \nabla_{(x,y)} d(X^{j}, \Gamma) -  \nabla_{(x,y)} d(Y^{j}, \Gamma)\right) \|^{2}\\
	%&+ \sum_{j=1}^{N} \| \sum_{i\neq j}   \frac{X^{{i}}-X^{j} }{N(N-1)R^{2}}\exp(-(\| X^{{i}} - X^{j}\|^{2} - r^{2})/(2R^{2}))\\
	%&=- \frac{Y^{{i}}-Y^{j} }{N(N-1)R^{2}} \exp(-(\| Y^{{i}} - Y^{j}\|^{2} - r^{2})/(2R^{2}))\|^{2}
	%\end{align*}

	\subsection{\fix{Convergence to the global minima in probability}}
	As described in the model, the goal of introducing \eqref{gradient2} is to move the robots to the intermediate locations generated by the SDEs \eqref{SDE}. Therefore, the convergence of the trajectories to the desired shape means that the solutions of \eqref{SDE} march to the global minima of $\Psi(x)$, which is guaranteed by the theory of optimal transport. More precisely, the idea of combining \eqref{gradient} and \eqref{gradient2} comes from the intermittent diffusion. Together, the dynamics can be equivalently described by a uniform formula given in \eqref{SDE}, in which \eqref{gradient} is performed when $\sigma = 0$, and \eqref{gradient2} reaches the same spatial locations as \eqref{SDE} when $\sigma$ is not zero. \fix{Hence the question of whether or not $X(t)$ converges to the desired shape can be investigated by examining the distributions of trajectories in \eqref{SDE}.} 
	
	We recall from Section \ref{sec:mathpre} that \fix{the probability density function $\rho(y,t)$ of the stochastic process $Y(t)$ from \eqref{SDE} evolves according to the Fokker-Planck equation},
	which is a transport equation when $\sigma =0$, and a diffusion equation when $\sigma > 0$. \fix{In the diffusion case, the asymptotic solution, also called the steady state, is the Gibbs distribution defined in \eqref{gibbs},
		%defined in $\Omega\times \RR^+$ with reflecting boundary conditions on $\partial\Omega$.
		%Under certain conditions on the potential $\Psi$ and the domain $\Omega$, it is known that $\rho$ converges globally, in the Wasserstein norm \cite{Otto2004},  to a Gibbs density $\rho^{*}$ given by
		%\begin{equation}
		%\rho^{*}(X) = K^{-1}\exp(-2\Psi(X)/\sigma^2),\qquad K=\int_\RR^d \exp(-2\Psi(X)/\sigma^2)dx.
		%\end{equation}
		suggesting the probability that $X(t)$ is within the attractive neighborhood, denoted by $\hat{U}$, of the global minimum of $\Psi$ is positive if $t$ is large enough. Here $\hat{U}$ is defined as the neighborhood of $\Gamma$, in which the trajectory of the gradient flow \eqref{gradient} with any initial configuration $X_0\in \hat{U}$ satisfies $\lim_{t\rightarrow \infty}\mu( X^i(t))= 0$, where $\mu$ is the distance function to $\Gamma$ defined in \refeq{distance}. %for a given $\sigma$. 
		By the subsequent gradient flow \eqref{gradient}, $X(t)$ remains inside of the target $\Gamma$ or moves arbitrarily close to it. This suggests that there is a positive probability that $X(t)$ is within a small neighborhood of $\Gamma$ after one cycle of intermittent diffusion ($\sigma$ taking positive and then zero values once) is also positive. Repeating the cycle of intermittent diffusion, we obtain the following convergence theorem.}

	\begin{theorem} \label{thm4}
		Suppose $\Psi(x)$ attains its global minima on a set $\Gamma$ of positive Lebesgue measure, and \fix{let $U \subseteq \hat{U}$ be a small neighborhood of $\Gamma$}. Then for any $0<\eta<1$ there exist constants $T^{*}>0$, $\sigma_{0}>0$ and $K_{0}>0$ such that if $(T_{i}-S_{i})>T^{*}$, $\sigma_{i}<\sigma_{0}$  for $1\leq i \leq K$ and $K>K_{0}$, the solution $X_{opt}$ calculated by Algorithm \ref{algorithm} satisfies
		\begin{align*}
			\mathbb{P}(X_{opt}\in U)\geq 1-\eta.
		\end{align*}
		where $\mathbb{P}$ is the probability \fix{function}.
	\end{theorem}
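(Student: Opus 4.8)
The plan is to run the intermittent-diffusion argument of \cite{Chow2013}, splitting the statement into (i) concentration of the Gibbs measure on $Q$, (ii) a uniform positive lower bound on the chance that one diffusion burst followed by one gradient-descent phase ends in $U$, and (iii) a Borel--Cantelli-type amplification over the $K$ cycles; the constants are produced in this order. First I would fix $\eta>0$ and inspect the invariant density $\rho^{*}\propto\exp(-2\Psi/\sigma^{2})$ of \eqref{gibbs}. Since $Q=\{\Psi=\min\Psi\}$ has positive Lebesgue measure and $\Psi$ is continuous with compact sublevel sets on the bounded domain $\Omega$, a Laplace-type estimate gives $\mu_{\sigma}(U)\to1$ as $\sigma\downarrow0$, because $\int_{U}\exp(-2(\Psi-\min\Psi)/\sigma^{2})\,dx\ge|Q|>0$ while the mass on the compact set $\Omega\setminus U$ is at most $|\Omega|\exp(-2c/\sigma^{2})$ with $c=\inf_{\Omega\setminus U}(\Psi-\min\Psi)>0$. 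This fixes $\sigma_{0}>0$, a smaller open neighbourhood $V$ of $Q$ with $\overline{V}\subset U$ lying in the basin of attraction of $Q$ under \eqref{gradient}, and an $\epsilon_{0}>0$ with $\{\Psi\le\min\Psi+\epsilon_{0}\}\subset U$, so that $\mu_{\sigma}(V)\ge\tfrac34$ whenever $0<\sigma<\sigma_{0}$.

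Next I would analyse one cycle $[T_{k-1},T_{k}]$. On the diffusion sub-interval the virtual process solves $dY=-\nabla\Psi(Y)\,dt+\sigma_{k}\,dW$ with $\sigma_{k}>0$; its generator is uniformly elliptic on $\Omega$, so the transition law has a strictly positive density, and for sub-intervals that are long relative to the mixing time it is additionally within $\tfrac14$ in total variation of $\mu_{\sigma_{k}}$. Either ingredient gives $\mathbb{P}\big(Y(S_{k})\in V\mid\text{the history up to }T_{k-1}\big)\ge p_{0}>0$, with $p_{0}$ independent of the (random) configuration at $T_{k-1}$; the choice of $T^{*}$ — large enough both for this estimate and for the relaxation step below — is what the hypothesis $T_{i}-S_{i}>T^{*}$ encodes. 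On the following gradient-descent sub-interval we have $\sigma=0$, so $Y$ follows the deterministic flow \eqref{gradient}, and the dissipation identity $\frac{d}{dt}\Psi(Y)=-\sum_{i}\|(\nabla\Psi(Y))_{i}\|^{2}\le0$ from the proof of Theorem~\ref{thm1} shows that, starting from $V$, $\Psi(Y(t))$ decreases with $Y(t)\to Q$; hence after time $\ge T^{*}$ one has $\Psi(Y(T_{k}))\le\min\Psi+\epsilon_{0}$, that is, $Y(T_{k})\in U$.

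Finally I would chain the cycles. Because $X_{opt}$ records the lowest-$\Psi$ configuration seen so far and $\Psi$ is non-increasing along every descent phase, once some $Y(T_{k})$ enters $\{\Psi\le\min\Psi+\epsilon_{0}\}$ the recorded optimum stays in this sublevel set, hence in $U$, from then on. Calling cycle $k$ a success when $Y(S_{k})\in V$, the Markov property and the uniform bound $p_{0}$ give $\mathbb{P}(\text{no success among cycles }1,\dots,K)\le(1-p_{0})^{K}$; choosing $K_{0}$ with $(1-p_{0})^{K_{0}}<\eta$ then yields $\mathbb{P}(X_{opt}\in U)\ge1-(1-p_{0})^{K}\ge1-\eta$ whenever $K>K_{0}$, $T_{i}-S_{i}>T^{*}$ and $\sigma_{i}<\sigma_{0}$ for all $i$. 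I expect the main obstacle to be the uniformity claim in the second step: the per-cycle success probability $p_{0}$ must be bounded away from zero independently of the random, history-dependent starting point of the burst and of $\sigma_{k}$ in the admissible range, which is exactly a quantitative mixing bound (spectral gap, Poincar\'e inequality, or Doeblin minorisation) for the Langevin diffusion on the compact domain $\Omega$. By comparison the Gibbs-concentration step is a routine Laplace asymptotics computation, and the descent step is precisely the energy-dissipation estimate already established for Theorem~\ref{thm1}.
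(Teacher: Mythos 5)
The paper does not actually prove this theorem: it is imported verbatim from the intermittent-diffusion literature, with the entire argument delegated to \cite{Chow2013}. So your proposal is being measured against that external proof rather than against anything in the text. Your three-step decomposition --- Laplace concentration of the Gibbs measure \eqref{gibbs} on $Q$ (where the positive Lebesgue measure of $Q$ is used exactly as you use it, to lower-bound the numerator by $|Q|$), a per-cycle success probability, and geometric amplification over the $K$ cycles with $X_{opt}$ acting as a one-way latch --- is the correct skeleton of that proof, and your descent step correctly reuses the dissipation identity from Theorem~\ref{thm1} to turn "land in a compact subset of the basin of attraction" into "reach the sublevel set $\{\Psi\le\min\Psi+\epsilon_0\}\subset U$ after time $T^*$," which is precisely what the hypothesis $T_i-S_i>T^*$ is for (recall from \eqref{sigma} that $[S_i,T_i]$ is the deterministic phase).

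The genuine soft spot is the one you flag, and it is worse than you suggest: the uniform lower bound $p_0$ cannot be obtained the way you propose. Your estimate "within $\tfrac14$ in total variation of $\mu_{\sigma_k}$ for sub-intervals long relative to the mixing time" is not available under the stated hypotheses, for two reasons. First, the theorem places no lower bound on the length $S_k-T_{k-1}$ of the diffusion burst, so "long relative to the mixing time" is not guaranteed. Second, and more fundamentally, the mixing time of the Langevin diffusion blows up as $\sigma_k\downarrow 0$ (metastability), and the hypothesis only bounds $\sigma_i$ from \emph{above} by $\sigma_0$; for a fixed burst length and $\sigma_k$ arbitrarily small, the process started far from $Q$ reaches $V$ with probability tending to zero, so no $\sigma$-uniform Doeblin constant exists. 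The fallback you mention --- strict positivity of the transition density --- gives $p_k>0$ for each cycle but with $p_k$ depending on $\sigma_k$ and on the burst length, and $\sum_k p_k$ need not diverge, so $(1-p_0)^K$ cannot be written down. A correct argument (and the statement in \cite{Chow2013}) must also control the diffusion durations $S_k-T_{k-1}$ from below and treat the small-$\sigma$ regime by balancing the two limits (burst long enough, relative to $\sigma_k$, to escape local basins; $\sigma_k$ small enough that the Gibbs measure concentrates on $U$). As written, your chaining step in the last paragraph presupposes a uniformity that the hypotheses of the theorem do not deliver.
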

	
	The proof of this theorem essentially follows the same steps as the proof \fix{in \cite{Chow2013}, in which the convergence of the density is considered in the $L^1$ sense by using the Csiszar-Kullback inequality (see Remark 22.12 in \cite{Villani2008}). For the completeness of this paper, we present a sketch of the proof modified to guarantee convergence in the $W_2$ sense. 
		
		\begin{proof} By the construction of $\Phi(X)$, its value is non-negative and reaches the minimum $0$ only if $X \in \Gamma$. This suggests that the Gibbs distribution $\rho^*(y)$ attains its maximum when $y \in \Gamma$. 
			%Let $\hat{U}$ be an attractive region of $\Gamma$, meaning that the gradient flow defined by \eqref{gradient} with any initial configuration $X_0\in \hat{U}$ satisfies $\lim_{t\rightarrow \infty}\mu( X^i(t))= 0$, where $\mu$ is the distance function to $\Gamma$ defined in \refeq{distance}. 
			Since $\Gamma$ has a positive Lebesgue measure, so does $\hat{U}\supset \Gamma$. Hence there exits a positive constant $\nu$ such that
			$$
			\int_{y \in \hat{U}} \rho^*(y) dy = 2\nu > 0.
			$$
			In fact, $2\nu \in (0,1]$ approaches $1$ when $\sigma$ tends to $0$ according to the property of Gibbs distribution. 
			
			From Theorem 24.7 and discussions following in Example 24.8 and Remark 24.12 in \cite{Villani2008}, we have 
			$$
			W_2(\rho(y,t), \rho^*(y)) < C e^{-\lambda t},
			$$
			where $C$ and $\lambda$ are constants, $\lambda$ is related to the well-known Log-Sobolev inequality (see Definition 21.1 in \cite{Villani2008}), and $\rho(y,t)$ is the solution of Fokker-Planck equation \eqref{fpe1} with $\sigma >0$. This implies that there exists a constant $T^*>0$ such that
			$$
			\int_{y \in \hat{U}} \rho(y, t) dy > \nu,
			$$
			for arbitrary $t>T^*$. It suggests that  there is a positive probability greater than $\nu$, that $Y(T_i)$ is in $\hat{U}$ when $(T_i-S_i)> T^*$ in the virtual diffusion process \eqref{SDE}. Because $\hat{Y} = Y(T_i)$ is used in \eqref{gradient2}, we conclude that the initial position $X(T_i) = \hat{Y}$ for the gradient flow \eqref{gradient} belongs to $\hat{U}$ with a positive probability.  The trajectories $X(t)$ that start from $X(T_i)$ are pushed into the neighborhood $U$ exponentially fast due to the definition of $\hat{U}$ and gradient flow properties. 
			
			In other words, the probability that the trajectory $X(t)$ does not end in $U$ is at most $(1-\nu)$ every time when one virtual diffusion and gradient flow cycle is completed. If such a cycle is performed $K$ times, the probability that $X(t)$ does not reach $U$ is $(1-\nu)^K$. Since $0 \leq (1-\nu)<1$, there exist a $K_0 > 0$ such that $(1-\nu)^K < \eta$ for any $K>K_0$. Therefore, 
			$$
			\mathbb{P}(X_{opt} \in U) \geq (1 - (1- \nu)^K) \geq (1-\eta),
			$$
			which completes the proof.
		\end{proof}
	}
	
	The proof also indicates that $(1-\eta)$ approaches $1$ in the manner of 
	$$\eta=\mathcal{O}((1-\nu)^K),$$
	which forms a geometric sequence in term\fix{s} of $K$. This is a much quick convergence rate than the logarithm function owned by the simulated annealing. \fix{We would like to point out that the convergence result presented in Theorem \ref{thm4} is in the sense of probability, which is different from the usual deterministic convergence results given in the $L^p$-norm or maximum norm,} but our numerical experiments show that $X_{opt}$ always reaches the desired shape $\Gamma$ without failure if the parameters are selected properly. 
	
	% For an arbitrary initial condition $X^{0}$, it is not guaranteed that solutions to \refeq{SDE} converge to global minima. In fact, there is often a $\hat{T}>0$ where robots that move according to the trajectories $X_{n}$ will often get ``trapped'' in local minima $\hat{X}$ for $n\Delta t >\hat{T}$. Once robots reach such a ``trap'', the next diffusion segment begins. The theory gives that the sequence of local minimizers $X_{opt}^{k}$ converges to the global minimum $X^{*}$. The iterations  $X_{n}$ of the algorithm here do not follow ID precisely, but stay within a small neighborhood of $X^{k}_{opt}$, so a sequence of them should work. 

	\section{Conclusions and Future Work}
	
	We present a motion planning strategy for a large group of robots to accomplish shape formation, one of the fundamental tasks in many applications that employ multi-robot systems. Typical challenges include how to avoid collisions and deadlocks in motion planning and how to achieve the desired shape with assurance. Those challenges become more significant for large groups of of robots and robots with low functionality. In our method, we calculate the individual robot trajectories by alternating two gradient flows that involve an attractive potential, a repelling function\fix{,} and a process of intermittent diffusion. The potential attracts robots to form the targeted shape, while the repelling function is designed to ensure collision-free motions. The intermittent diffusion, originally a stochastic approach but here realized by deterministic means, overcomes situations with deadlocks. Our strategy is inspired by recent developments in the theory of optimal transport which in turn provides the basis for theoretical guarantees of collision avoidance and global convergence. Numerical experiments confirm that the proposed algorithm is simple, yet effective in achieving \fix{the} desired objectives.  
	
	The presentation here in the two-dimensional setting can be extended to higher dimensions with straight forward adaptations. The proposed strategy can also be adapted to accommodate inhomogeneous multi-robot systems, in which robots may have different functionalities. In this scenario, the differences among robots must be reflected throughout the selections of the potential functions, including both $F(x)$ and $G(x)$. On the technical side, this may not be easy to accomplish and it is in our plan for further investigation.
	
	\newpage
	
	\bibliographystyle{plain}
	\bibliography{Mendeley}

\begin{thebibliography}{10}

\bibitem{Agmon2012}
Noa Agmon, Chien~Liang Fok, Yehuda Emaliah, Peter Stone, Christine Julien, and
  Sriram Vishwanath.
\newblock {On coordination in practical multi-robot patrol}.
\newblock {\em Proceedings - IEEE International Conference on Robotics and
  Automation}, pages 650--656, 2012.

\bibitem{Amato96}
N.~M. Amato and Y.~Wu.
\newblock A randomized roadmap method for path and manipulation planning.
\newblock In {\em Proceedings of IEEE International Conference on Robotics and
  Automation}, volume~1, pages 113--120 vol.1, April 1996.

\bibitem{Ambrosio}
Luigi Ambrosio and Nicola Gigli.
\newblock A user's guide to optimal transport.
\newblock In {\em {\it Modelling and Optimisation of Flows on Networks}.
  Lecture Notes in Mathematics}, page 1–155. Springer Berlin Heidelberg,
  2013.

\bibitem{Anderson2018}
Brendon~G Anderson, Eva Loeser, Marissa Gee, Fei Ren, Swagata Biswas, Olga
  Turanova, Matt Haberland, and Andrea~L Bertozzi.
\newblock {Quantitative Assessment of Robotic Swarm Coverage}.
\newblock In {\em Proc. 15th Int. Conf. on Informatics in Control, Automation,
  and Robotics}, pages 91--101, 2018.

\bibitem{Bandyopadhyay2014}
Saptarshi Bandyopadhyay, Soon-Jo Chung, and Fred~Y. Hadaegh.
\newblock {Probabilistic swarm guidance using optimal transport}.
\newblock {\em 2014 IEEE Conference on Control Applications (CCA)}, pages
  498--505, 2014.

\bibitem{Bertozzi2004}
Andrea~L. Bertozzi, Mathieu Kemp, and Daniel Marthaler.
\newblock {Determining environmental boundaries: asynchronous communication and
  physical scales}.
\newblock In {\em Cooperative Control}, pages 25--42. Springer, Berlin,
  Heidelberg, nov 2004.

\bibitem{Cao1997}
Y.U. Cao, A.S. Fukunaga, A.B. Kahng, and F.~Meng.
\newblock {Cooperative mobile robotics: antecedents and directions}.
\newblock {\em Proceedings 1995 IEEE/RSJ International Conference on
  Intelligent Robots and Systems. Human Robot Interaction and Cooperative
  Robots}, 1, 1997.

\bibitem{Chanclou}
B.~Chanclou and A.~Luciani.
\newblock Global and local path planning in natural environment by physical
  modeling.
\newblock In {\em Proceedings of IEEE/RSJ International Conference on
  Intelligent Robots and Systems. IROS '96}, volume~3, pages 1118--1125. IEEE,
  19966.

\bibitem{Chow2013}
Shui-Nee Chow, Tzi-Sheng Yang, and Haomin Zhou.
\newblock {Global optimizations by intermittent diffusion}.
\newblock {\em Chaos, CNN, Memristors and Beyond}, pages 466--479, 2013.

\bibitem{Chung2018}
S.~Chung, A.~A. Paranjape, P.~Dames, S.~Shen, and V.~Kumar.
\newblock {A Survey on Aerial Swarm Robotics}.
\newblock {\em IEEE Transactions on Robotics}, 34:837--855, 2018.

\bibitem{DeBadyn2019}
Mathias~Hudoba {De Badyn}, Utku Eren, Behcet Acikmese, and Mehran Mesbahi.
\newblock {Optimal Mass Transport and Kernel Density Estimation for
  State-Dependent Networked Dynamic Systems}.
\newblock In {\em Proceedings of the IEEE Conference on Decision and Control},
  2019.

\bibitem{Delon2013}
Julie Delon.
\newblock {Optimal transport and Wasserstein barycenters in computer vision ,
  image and video processing . Optimal transport and vision - Teaser}.
\newblock 2013.

\bibitem{Dijkstra1959}
E.~W. Dijkstra.
\newblock {A Note on Two Problems in Connexion with Graphs}.
\newblock {\em Numerische Mathematik}, 1:269--271, 1959.

\bibitem{Egerstedt2016}
Magnus Egerstedt.
\newblock {Swarming robots}.
\newblock {\em Snapshot of Modern Mathematics}, 1(1):1--9, 2016.

\bibitem{Elamvazhuthi2019}
Karthik Elamvazhuthi, Hendrik Kuiper, Matthias Kawski, and Spring Berman.
\newblock {Bilinear Controllability of a Class of Advection-Diffusion-Reaction
  Systems}.
\newblock {\em IEEE Transactions on Automatic Control}, 2019.

\bibitem{Eren2017}
Utku Eren and Beh{\c{c}}et A{\c{c}}ıkmeşe.
\newblock {Velocity Field Generation for Density Control of Swarms using Heat
  Equation and Smoothing Kernels}.
\newblock {\em IFAC-PapersOnLine}, 2017.

\bibitem{Feng2016}
Zexin Feng, Brittany Froese, and Rongguang Liang.
\newblock {Freeform illumination optics construction following an optimal
  transport map}.
\newblock {\em Applied optics}, 55(16):4301, jun 2016.

\bibitem{Ferguson05}
Dave Ferguson, Maxim Likhachev, and Anthony Stentz.
\newblock A guide to heuristic based path planning.
\newblock In {\em in: Proceedings of the Workshop on Planning under Uncertainty
  for Autonomous Systems at The International Conference on Automated Planning
  and Scheduling (ICAPS}, 2005.

\bibitem{Ferrari2016}
Silvia Ferrari, Greg Foderaro, Pingping Zhu, and Thomas~A. Wettergren.
\newblock {Distributed Optimal Control of Multiscale Dynamical Systems: A
  Tutorial}.
\newblock {\em IEEE Control Systems}, 36(2):102--116, apr 2016.

\bibitem{Fiorini1998}
Paolo Fiorini and Zvi Shiller.
\newblock {Motion planning in dynamic environments using velocity obstacles}.
\newblock {\em International Journal of Robotics Research}, 17(7):760--772, jul
  1998.

\bibitem{Galichon2017}
Alfred Galichon.
\newblock {A survey of some recent applications of optimal transport methods to
  econometrics}.
\newblock {\em Econometrics Journal}, 20(2):C1--C11, jun 2017.

\bibitem{Ge2002}
S.~S. Ge and Y.~J. Cui.
\newblock {Dynamic motion planning for mobile robots using potential field
  method}.
\newblock {\em Autonomous Robots}, 13(3):207--222, 2002.

\bibitem{Gerkey2004}
Brian~P Gerkey and Maja~J Mataric.
\newblock {Are (explicit) multi-robot coordination and multi-agent coordination
  really so different ?}
\newblock {\em Proceedings of the AAAI Spring Symposium on Bridging the
  Multi-Agent and Multi-Robotic Research Gap}, pages 1--3, 2004.

\bibitem{Hart1968}
Peter Hart, Nils Nilsson, and Bertram Raphael.
\newblock {A Formal Basis for the Heuristic Determination of Minimum Cost
  Paths}.
\newblock {\em IEEE Transactions on Systems Science and Cybernetics},
  4(2):100--107, 1968.

\bibitem{Hoy2015}
Michael Hoy, Alexey~S. Matveev, and Andrey~V. Savkin.
\newblock {Algorithms for collision-free navigation of mobile robots in complex
  cluttered environments: A survey}.
\newblock {\em Robotica}, 33(3):463--497, mar 2015.

\bibitem{Hsieh2008}
M.~Ani Hsieh, Vijay Kumar, and Luiz Chaimowicz.
\newblock {Decentralized controllers for shape generation with robotic swarms}.
\newblock {\em Robotica}, 2008.

\bibitem{Jager2001}
Markus J{\"{a}}ger and Bernhard Nebel.
\newblock {Decentralized collision avoidance, deadlock detection, and deadlock
  resolution for multiple mobile robots}.
\newblock {\em IEEE International Conference on Intelligent Robots and
  Systems}, 3:1213--1219, 2001.

\bibitem{Jordan1998}
Richard Jordan, David Kinderlehrer, and Felix Otto.
\newblock {The variational formulation of the Fokker–Planck equation}.
\newblock {\em SIAM journal on mathematical analysis}, 1998.

\bibitem{Kalra2006}
Nidhi Kalra, Robert Zlot, M.~Bernardine Dias, and Anthony Stentz.
\newblock {Market-Based Multirobot Coordination: A Comprehensive Survey and
  Analysis}.
\newblock {\em Robotics Institute Carnegie Mellon University Tech Rep
  CMURITR0516}, 94(April):48, 2006.

\bibitem{Kaminka2010}
Gal~A. Kaminka, Dan Erusalimchik, and Sarit Kraus.
\newblock {Adaptive multi-robot coordination: A game-theoretic perspective}.
\newblock {\em Proceedings - IEEE International Conference on Robotics and
  Automation}, pages 328--334, 2010.

\bibitem{Kantorovich2006}
L~V Kantorovich.
\newblock {On the translocation of masses}.
\newblock {\em Management Science}, 133(4):1381--1382, oct 2006.

\bibitem{Karaman}
Sertac Karaman and Emilio Frazzoli.
\newblock {Sampling-based Algorithms for Optimal Motion Planning}.
\newblock {\em Journal of Robotics Research}, 30:846–894, 2011.

\bibitem{Kavraki94}
L.~Kavraki and J.~. Latombe.
\newblock Randomized preprocessing of configuration for fast path planning.
\newblock In {\em Proceedings of the 1994 IEEE International Conference on
  Robotics and Automation}, pages 2138--2145 vol.3, May 1994.

\bibitem{Khatib1986}
O.~Khatib.
\newblock {Real-Time Obstacle Avoidance for Manipulators and Mobile Robots.}
\newblock {\em International Journal of Robotics Research}, 5(1):90, 1986.

\bibitem{Koenig}
Sven Koenig and Maxim Likhachev.
\newblock {D* Lite}.
\newblock {\em AAAI Conference of Artificial Intelligence}, 2002.

\bibitem{Koenig2005}
Sven Koenig and Maxim Likhachev.
\newblock {Fast replanning for navigation in unknown terrain}.
\newblock {\em IEEE Transactions on Robotics}, 21(3):354--363, 2005.

\bibitem{Koenig2004}
Sven Koenig, Maxim Likhachev, and David Furcy.
\newblock {Lifelong Planning A*}.
\newblock {\em Artificial Intelligence}, 155(1-2):93--146, may 2004.

\bibitem{Koren}
Y~Koren and J~Borenstein.
\newblock {Potential field methods and their inherent limitations for mobile
  robot navigation}.
\newblock {\em Proceedings of the IEEE Conference on Robotics and Automation},
  pages 1398--1404, 1991.

\bibitem{Krishnan2018}
V.~{Krishnan} and S.~{Martínez}.
\newblock Distributed optimal transport for the deployment of swarms.
\newblock In {\em 2018 IEEE Conference on Decision and Control (CDC)}, pages
  4583--4588, 2018.

\bibitem{Latombe1990}
Jean-Claude. Latombe.
\newblock {\em {Robot Motion Planning}}.
\newblock Kluwer Academic Publishers, 1990.

\bibitem{LaValle2000}
Steven~M. LaValle and James~J. Kuffner.
\newblock {Rapidly-exploring random trees: Progress and prospects}.
\newblock {\em 4th Workshop on Algorithmic and Computational Robotics: New
  Directions}, pages 293--308, 2000.

\bibitem{Lee2015}
Sung~G. Lee, Yancy Diaz-Mercado, and Magnus Egerstedt.
\newblock {Multirobot Control Using Time-Varying Density Functions}.
\newblock {\em IEEE Transactions on Robotics}, 31(2):489--493, apr 2015.

\bibitem{Li2017b}
Hanjun Li, Chunhan Feng, Henry Ehrhard, Yijun Shen, Bernardo Cobos, Fangbo
  Zhang, Karthik Elamvazhuthi, Spring Berman, Matt Haberland, and Andrea~L
  Bertozzi.
\newblock {Decentralized stochastic control of robotic swarm density: Theory,
  simulation, and experiment}.
\newblock In {\em IEEE International Conference on Intelligent Robots and
  Systems}, volume September, pages 4341--4347, 2017.

\bibitem{Li2017}
Wuchen Li, Jun Lu, Haomin Zhou, and Shui~Nee Chow.
\newblock {Method of evolving junctions: A new approach to optimal control with
  constraints}.
\newblock {\em Automatica}, 78:72--78, apr 2017.

\bibitem{Likachev2008}
Maxim Likhachev, Dave Ferguson, Geoff Gordon, Anthony Stentz, and Sebastian
  Thrun.
\newblock Anytime search in dynamic graphs.
\newblock {\em Artificial Intelligence}, 172(14):1613 -- 1643, 2008.

\bibitem{Lu2014}
Jun Lu.
\newblock {\em {Method of evolving junctions: a new approach to path planning
  and optimal control}}.
\newblock PhD thesis, 2014.

\bibitem{Lu2014a}
Jun Lu, Yancy Diaz-Mercado, Magnus Egerstedt, Haomin Zhou, and Shui~Nee Chow.
\newblock {Shortest paths through 3-dimensional cluttered environments}.
\newblock In {\em Proceedings - IEEE International Conference on Robotics and
  Automation}, pages 6579--6585. IEEE, may 2014.

\bibitem{Luna2011}
Ryan Luna and Kostas~E Bekris.
\newblock {Efficient and complete centralized multi-robot path planning}.
\newblock In {\em IEEE International Conference on Intelligent Robots and
  Systems}, pages 3268--3275, 2011.

\bibitem{Marcolino2009a}
Leandro~Soriano Marcolino and Luiz Chaimowicz.
\newblock {Traffic control for a swarm of robots: Avoiding group conflicts}.
\newblock {\em 2009 IEEE/RSJ International Conference on Intelligent Robots and
  Systems, IROS 2009}, (June 2014):1949--1954, 2009.

\bibitem{Merigot2011}
{M{\'{e}}rigot, Quentin}.
\newblock {A Multiscale Approach to Optimal Transport}.
\newblock {\em Computer Graphics Forum}, 30(5):1583--1592, aug 2011.

\bibitem{MinCheolLee2003}
{Min Cheol Lee} and {Min Gyu Park}.
\newblock {Artificial potential field based path planning for mobile robots
  using a virtual obstacle concept}.
\newblock {\em Proceedings 2003 IEEE/ASME International Conference on Advanced
  Intelligent Mechatronics (AIM 2003)}, 2(Aim):735--740, 2003.

\bibitem{Lavalle1998}
Steven M.Lavalle, Steven~M. Lavalle, and Steven~M. Lavalle, 1998.

\bibitem{Nesterov2004}
Yurii Nesterov.
\newblock {\em {Introductory Lectures on Convex Optimization}}, volume~87.
\newblock Springer, 2004.

\bibitem{Ogren2001}
P.~{Ogren}, M.~{Egerstedt}, and X.~{Hu}.
\newblock A control lyapunov function approach to multi-agent coordination.
\newblock In {\em Proceedings of the 40th IEEE Conference on Decision and
  Control (Cat. No.01CH37228)}, volume~2, pages 1150--1155 vol.2, 2001.

\bibitem{Overmars92}
Mark~H. Overmars.
\newblock A random approach to motion planning.
\newblock Technical report, 1992.

\bibitem{Svestka1997}
Svestka. P.
\newblock {\em {Robot motion planning using probabilistic roadmaps}}.
\newblock PhD thesis, 1997.

\bibitem{Park2013}
Chonhyon Park, Jia Pan, and Dinesh Manocha.
\newblock {Real-time optimization-based planning in dynamic environments using
  GPUs}.
\newblock In {\em Proceedings - IEEE International Conference on Robotics and
  Automation}, pages 4090--4097. IEEE, may 2013.

\bibitem{Park2001a}
Min~Gyu Park, Jae~Hyun Jeon, and Min~Cheol Lee.
\newblock {Obstacle avoidance for mobile robots using artificial potential
  field approach with simulated annealing}.
\newblock {\em Proceedings. ISIE 2001. IEEE International Symposium on
  Industrial Electronics, 2001.}, 3:1530--1535, 2001.

\bibitem{Park2001}
Min~Gyu Park, Jae~Hyun Jeon, and Min~Cheol Lee.
\newblock {Obstacle avoidance for mobile robots using artificial potential
  field approach with simulated annealing}.
\newblock {\em Proceedings. ISIE 2001. IEEE International Symposium on
  Industrial Electronics, 2001.}, pages 1530--1535, 2001.

\bibitem{Parker2002}
L.E. Parker.
\newblock {A distributed and optimal motion planning approach for multiple
  mobile robots}.
\newblock {\em Proceedings 2002 IEEE International Conference on Robotics and
  Automation (Cat. No.02CH37292)}, 3(May):2612--2619, 2002.

\bibitem{Rekleitis2004}
I.~Rekleitis, V.~Lee-Shue, Ai~Peng New Ai~Peng New, and H.~Choset.
\newblock {Limited communication, multi-robot team based coverage}.
\newblock {\em IEEE International Conference on Robotics and Automation, 2004.
  Proceedings. ICRA '04. 2004}, 4(April):3462--3468, 2004.

\bibitem{Rimon1992}
Elon Rimon and Daniel~E. Koditschek.
\newblock {Exact Robot Navigation using Artificial Potential Functions}.
\newblock {\em IEEE Transactions on Robotics and Automation}, 8(5):501--518,
  1992.

\bibitem{Rossi2018}
Federico Rossi, Saptarshi Bandyopadhyay, Michael Wolf, and Marco Pavone.
\newblock {Review of Multi-Agent Algorithms for Collective Behavior: a
  Structural Taxonomy}.
\newblock {\em IFAC-PapersOnLine}, 51:112--117, 2018.

\bibitem{Santos2018}
M.~Santos, Y.~Diaz-Mercado, and M.~Egerstedt.
\newblock Coverage control for multirobot teams with heterogeneous sensing
  capabilities.
\newblock {\em IEEE Robotics and Automation Letters}, 3(2):919--925, April
  2018.

\bibitem{Stentz1995}
Anthony Stentz.
\newblock The focussed d* algorithm for real-time replanning.
\newblock In {\em Proceedings of the 14th International Joint Conference on
  Artificial Intelligence - Volume 2}, IJCAI'95, pages 1652--1659, San
  Francisco, CA, USA, 1995. Morgan Kaufmann Publishers Inc.

\bibitem{Villani2008}
Cedric Villani.
\newblock {\em {Optimal transport, old and new}}.
\newblock Springer-Verlag Berlin Heidelberg, 338 edition, 2008.

\bibitem{Warren}
C.W. Warren.
\newblock {Global path planning using artificial potential fields}.
\newblock In {\em Proceedings, 1989 International Conference on Robotics and
  Automation}, pages 316--321. IEEE Comput. Soc. Press, 1989.

\bibitem{Warrenb}
C.W. Warren.
\newblock {Multiple robot path coordination using artificial potential fields}.
\newblock In {\em Proceedings., IEEE International Conference on Robotics and
  Automation}, pages 500--505. IEEE Comput. Soc. Press, 1990.

\bibitem{Yan2013}
Zhi Yan, Nicolas Jouandeau, and Arab~Ali Cherif.
\newblock {A survey and analysis of multi-robot coordination}.
\newblock {\em International Journal of Advanced Robotic Systems}, 10, 2013.

\bibitem{Yu2007}
Y~Yu, B~Danila, J.~A Marsh, and K.~E Bassler.
\newblock {Optimal transport on wireless networks}.
\newblock {\em EPL}, 79(4):48004, aug 2007.

\bibitem{Zhang2018}
Fangbo Zhang, Andrea~L. Bertozzi, Karthik Elamvazhuthi, and Spring Berman.
\newblock {Performance bounds on spatial coverage tasks by stochastic robotic
  swarms}.
\newblock {\em IEEE Transactions on Automatic Control}, 63(6):1473--1488, 2018.

\end{thebibliography}
	
\end{document}